\newtheorem{theorem}{Theorem}[section]
\newtheorem{corollary}[theorem]{Corollary}
\newtheorem{proposition}[theorem]{Proposition}
\theoremstyle{definition}
\newtheorem{definition}[theorem]{Definition}
\newtheorem{convention}[theorem]{Convention}
\theoremstyle{remark}
\newtheorem{remark}[theorem]{Remark}
\newcommand\cB{\mathcal{B}}
\newcommand\cC{\mathcal{C}}
\newcommand\cE{\mathcal{E}}
\newcommand\cH{\mathcal{H}}
\newcommand\cI{\mathcal{I}}
\newcommand\cJ{\mathcal{J}}
\newcommand\cM{\mathcal{M}}
\newcommand\cP{\mathcal{P}}
\newcommand\cS{\mathcal{S}}
\newcommand{\cU}{\mathcal{U}}
\newcommand{\cV}{\mathcal{V}}
\newcommand{\cW}{\mathcal{W}}
\newcommand{\N}{\mathbb{N}}
\newcommand{\R}{\mathbb{R}}
\newcommand{\Z}{\mathbb{Z}}
\newcommand{\Q}{\mathbb{Q}}
\newcommand\eps{\epsilon}
\newcommand{\on}{\operatorname}
\newcommand{\Ham}{\on{Ham}}
\newcommand{\graph}{\on{graph}}
\newcommand{\Ind}{ \on{Ind}}
\newcommand\reg{{\on{reg}}}
\def\dbar{\overline\partial}
\newcommand\ul{\underline}
\newcommand\mtimes{\times \kern-.3ex}
\begin{document}

\title{Quilted Floer trajectories with constant components}

\author{Katrin Wehrheim and Chris T. Woodward}

\address{Department of Mathematics,
Massachusetts Institute of Technology,
Cambridge, MA 02139.
{\em E-mail address: katrin@math.mit.edu}}

\address{Department of Mathematics, 
Rutgers University,
Piscataway, NJ 08854.
{\em E-mail address: ctw@math.rutgers.edu}}

\begin{abstract}  
We fill a gap in the proof of the transversality result for quilted
Floer trajectories in \cite{quiltfloer} by addressing trajectories for
which some but not all components are constant.
Namely we show that for generic sets of split Hamiltonian
perturbations and split almost complex structures, the moduli spaces of
parametrized quilted Floer trajectories of a given index are smooth of expected
dimension.
An additional benefit of the generic split Hamiltonian perturbations is that they perturb the given cyclic Lagrangian correspondence such that any geometric composition of its factors is transverse and hence immersed.
\end{abstract} 

\maketitle

\vspace{-5mm}

\section{Introduction}  

Quilted Floer homology is defined for a cyclic generalized Lagrangian
correspondence $\ul{L}$, that is, a sequence of symplectic manifolds
$M_0,M_1,\ldots, M_{r},M_{r+1}$ with $M_0=M_{r+1}$ for $r \ge 0$, and
a sequence of Lagrangian correspondences
$$ 
L_{01} \subset M_0^- \times M_1, \ \ L_{12} \subset M_1^- \times
M_2, \ \ \ldots, \ \ L_{r(r+1)} \subset M_{r}^- \times M_{r+1} .
$$ 
For the purpose of transversality arguments we do not need any
monotonicity assumptions as in \cite{quiltfloer}, so throughout we
merely assume that all Lagrangian correspondences are compact.
Quilted Floer homology $HF(\ul{L})$ can be defined as the
standard Floer homology of a pair of Lagrangians in the product
manifold
$M_0^-\times M_1\times M_2^- \times \ldots \times M_{r}$, given by
products of the $L_{i(i+1)}$. (For even $r$ one adds a diagonal to the sequence before making this construction.)
As such, $HF(\ul{L})$ depends on the choice of a Hamiltonian function and almost complex structure on this
product manifold, which generically would not be of split form - i.e.\ induced by a tuple of Hamiltonian functions and almost complex structures on each symplectic manifold $M_j$. The quilted definition
of $HF(\ul{L})$ in \cite{quiltfloer} on the one hand generalizes this construction by allowing a choice of widths $\ul{\delta}=(\delta_j>0)_{j=0,\ldots,r}$ of the strips mapping to each $M_j$. 
On the other hand, we claim in \cite{quiltfloer} that the quilted Floer complex can be constructed (in particular transversality can be achieved) for Hamiltonians and almost complex structures of split type. 
That is, we restrict our choice of perturbation data to a tuple of Hamiltonian functions and a tuple of almost complex structures in the complete metric spaces
$$ 
\cH_t(\ul{\delta}) := \oplus_{j=0}^r \cC^\infty([0,\delta_j] \times M_j, \R) , \qquad
\cJ_t(\ul{\delta}) := \oplus_{j=0}^r \cC^\infty([0,\delta_j],\cJ(M_j,\omega_j)) ,
$$ 
where $\cJ(M_j,\omega_j)$ is the space of smooth $\omega_j$-compatible
almost complex structures on $M_j$.  While this split form is not
necessary for the strip shrinking analysis in \cite{isom}, it is
particularly helpful for constructing relative invariants (such as the
functor associated to a correspondence in \cite{cat}) from more
complicated quilted surfaces, which cannot be interpreted as single
surface mapping to a product manifold.  Unfortunately, the
transversality proof in \cite{quiltfloer} for the quilted Floer
trajectory spaces for generic split perturbation data
$\ul{H}\in\cH_t(\ul{\delta})$ and $\ul{J}\in\cJ_t(\ul{\delta})$ has a
significant gap:
It fails to explicitly discuss trajectories $\ul{u}=( u_j : \R \times [0,\delta_j] \to M_j )_{j=0,\ldots,r}$ for which some but not all components are constant.  This intermediate situation is not an easy combination of the two extreme cases 
(all components nonconstant, or all components constant)
as we seem to claim in \cite{quiltfloer}.

\subsection*{Results}

In Theorem~\ref{quilt trans} below we complete the proof of 
the transversality claimed in \cite{quiltfloer} by working with a more specific set of generic split Hamiltonian perturbations which may be of independent interest. 
In Theorem~\ref{H trans} and Corollary~\ref{cor H trans} we find a
dense open subset of $\cH_t(\ul{\delta})$ for any given cyclic
Lagrangian correspondence such that, after perturbation by one of
those split Hamiltonian diffeomorphisms, any geometric composition of
its factors is transverse and hence immersed.  Starting from such a
Hamiltonian perturbation, we observe that quilted Floer trajectories
with constant components induce quilted Floer trajectories for a
shorter cyclic generalized Lagrangian correspondence, given by a
localized version of geometric composition across the constant strips.
Using this point of view, we are able to find generic sets of split
almost complex structures for which quilted Floer trajectories with
constant components are regular, as well.
In fact, we show that quilted Floer trajectories with constant components
are very rare as summarized in Remark~\ref{rmk trans} and sketched below.

\subsection*{Idea of Proof}
A key role in the proof is played by certain families of isotropic subspaces which arise in the proof of transversality for the universal moduli space of almost complex structures and Floer trajectories.
The elements of the cokernel of the linearized operator of the universal moduli space are tuples of $-J_i$-holomorphic sections $\eta_i$ of $u_i^*TM_i$ for $i=0,\ldots,r$ with Lagrangian seam conditions determined by the tangent spaces of $L_{i(i+1)}$.  Ignoring Hamiltonian perturbations for simplicity, the problem of constant components occurs for example when some $u_j$ is constant (with value say $x_j \in M_j$) but the adjacent components $u_{j-1},u_{j+1}$ are non-constant. 
Then variations in the almost complex structures prove vanishing of $\eta_{j-1},\eta_{j+1}$, and hence $\eta_j:\R\times[0,\delta_j]\to T_{x_j} M_j$ is $-J_j$-holomorphic with boundary conditions in
\begin{align*}
\Lambda_j(s) 
&:= 
{\Pr}_{T_{x_j}M_j} \bigl( T_{(u_{j-1}(\delta_{j-1},s), x_j)} L_{(j-1)j} \cap ( \{ 0 \} \times T_{x_j} M_j) \bigr) 
\; \subset T_{x_j} M_j , \\
\Lambda_j'(s) 
&:= 
{\Pr}_{T_{x_j}M_j} \bigl(T_{(x_j, u_{j+1}(0,s))} L_{j(j+1)} \cap ( T_{x_{j}} M_j \times \{ 0 \})\bigr) 
\,\;\quad \subset
T_{x_j} M_j .
\end{align*}
The spaces $\Lambda_j(s),\Lambda_j'(s)$ are isotropic spaces varying with $s\in\R$,
despite the fact that $u_j\equiv x_j$ is constant.
We can now proceed differently in three non-exclusive cases.

\begin{enumerate}
\item
The easiest case is to assume that $\Lambda_j(s),\Lambda_j'(s)$ are $s$-independent. We may then enlarge these isotropic spaces to constant Lagrangian subspaces and deduce that $\eta_j$ lies in the kernel of an operator $\partial_s + A$, where $A$ is an $s$-independent self-adjoint operator and invertible (since by choice of $\ul{H}$ the generators of the Floer complex are cut out transversally). We then deduce vanishing of $\eta_j$ from the general fact that operators of this form $\partial_s+A$ are isomorphisms.

\item
An intermediate case occurs when $\Lambda_j(s)$ or $\Lambda_j'(s)$ fails to be Lagrangian (i.e.\ have maximal dimension) for some $s\in\R$.  For example, if $L_{(j-1)j}$ is the graph of a symplectomorphisms, then the intersection $\Lambda_j$ is trivial.  We show that this case of a quilted Floer trajectory with constant component does not occur for generic $(J_i)_{i\neq j}$.

\item
The most difficult case occurs when $\Lambda_j(s)$ and $\Lambda_j'(s)$ are non-constant families of Lagrangian subspaces.  We show that for generic $\ul{H}$ the locus on which such varying Lagrangian subspaces are possible is of positive codimension in the space of boundary values $(u_{j-1}(\delta_{j-1},s),u_{j+1}(0,s))$. 
Then we again exclude this case for generic $(J_i)_{i\neq j}$.
\end{enumerate}

Thus, for generic Hamiltonian perturbations $\ul{H}$ and almost complex structures $\ul{J}$ we in fact show a splitting property for any quilted Floer trajectory with constant components, namely along the seam $(u_j(s,\delta_j),u_{j+1}(s,0))\in L_{j(j+1)}$ we have
$T L_{j(j+1)} = \Lambda_j \times \Lambda_{j+1}$, where $\Lambda_j\subset TM_j$ is a constant Lagrangian subspace given as above, and $\Lambda_{j+1}$ is the $s$-dependent projection of $T L_{j(j+1)} \cap (\{0\}\times T M_{j+1} )$.
For a precise statement see Remark~\ref{rmk trans}.

The arguments in case (b) and (c) crucially rely on the following interpretation of quilted Floer trajectories with constant components as quilted Floer trajectories for a generalized Lagrangian correspondence obtained by a local version of geometric composition. 
If $\ul{u}=(u_0,\ldots, u_r)$ is a solution with $u_j\equiv x_j$ as above, then
$(u_0,\ldots,u_{j-1},u_{j+1},\ldots,u_r)$ is a quilted Floer trajectory for the generalized correspondence $(L_{01}, \ldots, L_{(j-1)j} \circ L_{j(j+1)},\ldots, L_{r(r+1)}) $.
We show in Theorem~\ref{H trans} that, after a generic Hamiltonian perturbation of $\ul{L}$, any geometric composition $L_{(j-1)j} \circ L_{j(j+1)}$ is an immersed Lagrangian correspondence. It becomes embedded if we restrict to values in $M_j$ near $x_j$. 
Hence $(u_0,\ldots,u_{j-1},u_{j+1},\ldots,u_r)$ can be viewed as quilted Floer trajectory for a smooth generalized Lagrangian correspondence.

We showed in \cite{isom} that transversality for this composed correspondence implies transversality for the original correspondence for sufficiently small widths $\delta_j>0$. 
Here we extend this transversality to solutions with constant $u_j$ for arbitrary $\delta_j>0$ and generic perturbation data $\ul{H}, \ul{J}$.

\subsection*{Alternative approaches}

It is perhaps worth remarking that all of the correspondences intended as applications in \cite{quiltfloer,fielda,tri} fit into the easiest case (a) described above since these Lagrangian correspondences $L_{01} \subset M_0^- \times M_1$ are {\em quasisplit} in the following sense:
The intersection $( T_{x_0} M_0 \times \{ 0 \}) \cap T_{(x_0,x_1)} L_{01} $ is independent of $x_1$ and the intersection $(\{ 0 \} \times T_{x_1} M_1) \cap T_{(x_0,x_1)}L_{01}$
is independent of $x_0$. 
Examples are split correspondences $L_0 \times L_1$, graphs of symplectomorphisms, correspondences arising from fibered coisotropics, and the embedded geometric composition of any two quasisplit correspondences.  If all Lagrangian correspondences are quasisplit then the simple argument in case (a) above completes the transversality argument for the universal moduli space in \cite{quiltfloer}.

Note however that one can easily construct Lagrangian correspondences
that are not quasisplit by applying a nonsplit Hamiltonian
diffeomorphism of $M_0^-\times M_{1}$ to a split correspondence
$L_0\times L_1$. 

Another possibility for achieving transversality at quilted Floer trajectories with constant components is to introduce nonsplit perturbations as in \cite{per:gys} and \cite{ll:geom}.
However, in order to implement this perturbation scheme for more general relative quilt invariants, one would have to replace each seam with seam condition $L_{ij}\subset M_i\times M_j$ by a novel triple seam condition pairing the two patches in $M_i$ and $M_j$ via a diagonal with one boundary of a strip in $M_i\times M_j$, whose other boundary has boundary conditions in $L_{ij}$. In that setup we may use non-split perturbations on the strip.

\medskip

{\em We thank Max Lipyanski for pointing out the question of constant components.}

\section{Hamiltonian perturbations of generalized Lagrangian correspondences}
\label{Ham}

Given a cyclic generalized Lagrangian correspondence $\ul{L}=(L_{j(j+1)})_{j=0,\ldots,r}$, widths $\ul{\delta}=(\delta_j>0)_{j=0,\ldots,r}$, and a tuple of Hamiltonian functions $\ul{H}=( H_j )_{j = 0,\ldots,r}\in\cH_t(\ul{\delta})$, the generators of the quilted Floer complex are tuples of Hamiltonian chords,
\begin{equation*} \label{cI}
\cI(\ul{L} ,\ul{H}) := \left\{ \ul{x}=\bigl(x_j: [0,\delta_j] \to M_j\bigr)_{j=0,\ldots,r} \, \left|
\begin{aligned}
\dot x_j(t) = X_{H_j}(x_j(t)), \\
(x_{j}(\delta_j),x_{j+1}(0)) \in L_{j(j+1)} 
\end{aligned} \right.\right\} .
\end{equation*}
They are canonically identified, via   
$\ul{x}\mapsto (x_{0}(\delta_0),x_{1}(0),x_{1}(\delta_1),\ldots, x_{r}(\delta_r),x_{0}(0) )$, with the fiber product
\begin{align*}
& \times_{\phi_{\delta_0}^{H_0}}\bigl( L_{01} \times_{\phi_{\delta_1}^{H_1}} L_{12} \ldots \times_{\phi_{\delta_r}^{H_r}} L_{r(r+1)}\bigr) \\
&:=
\bigl(L_{01}\times L_{12}\times\ldots\times L_{r(r+1)}\bigr)
\cap 
\bigl( \graph(\phi_{\delta_1}^{H_1}) \times \graph(\phi_{\delta_2}^{H_2}) \times \ldots \times \graph(\phi_{\delta_0}^{H_0}) \bigr)^T ,
\end{align*}
where $\phi_{\delta_j}^{H_{j}}$ is the time $\delta_j$ Hamiltonian
flow of~$H_{j}$ and $(\ldots)^T$ denotes the exchange of factors
$M_1\times\ldots\times M_0\times M_0 \to M_0\times M_1
\times\ldots\times M_0$.  In this setting we proved in
\cite{quiltfloer} that Hamiltonians of split type suffice to achieve
transversality for the generators.  We now strengthen this result to
achieve transversality for all partial fiber products.

\begin{convention}
Here and in the following we use indices $j\in\N$ modulo $r+1$.
A pair of indices $j<j'$ denotes a pair $j,j'\in\N$ with $j < j' \leq j+r+1$.
A pair of indices $j\lhd j'$ denotes a pair $j,j'\in\N$ with $j+1 < j' \leq j+r+1$,
that is, with at least one other index between $j$ and $j'$. 

For any proper subset $I\subset\{0,\ldots,r\}$ let $I^C\subset\{0,\ldots,r\}$ be its complement. 
Then a consecutive pair of indices $j<j' \in I^C$ (resp.\ $j\lhd j'\in I^C$)
denotes a pair  $j<j'$ (resp.\ $j\lhd j'$) as above such that 
$j,j'\in I^C$ and $\{j+1,\ldots,j'-1\}\subset I$.
\end{convention}

\begin{definition} 
For any pair of indices $j\lhd j'$ we define the partial fiber product
\begin{align*}
& L_{j(j+1)} \times_{H_{j+1}} L_{(j+1)(j+2)} \ldots \times_{H_{j'-1}} L_{(j'-1)j'} \\
&:=
\bigl( L_{j(j+1)}\times \ldots\times L_{(j'-1)j'} \bigr)
\cap 
\bigl( M_j \times \graph(\phi_{\delta_{j+1}}^{H_{j+1}}) \times\ldots\times \graph(\phi_{\delta_{j'-1}}^{H_{j'-1}}) \times M_{j'} \bigr).
\end{align*}
We trivially extend this notation to the case $j'=j+1$ by $L_{j j'}=L_{j(j+1)}$.
For a general proper subset of indices $I\subset\{0,\ldots,r\}$ we then define the partial fiber product 
$$
\mtimes_{I,\ul{H}}\ul{L} \;  := \prod_{\text{consec.}\; j<j'\in I^C}
L_{j(j+1)} \times_{H_{j+1}} \ldots \times_{H_{j'-1}} L_{(j'-1)j'}
$$
to be the product of the above fiber products
for each consecutive pair of indices $j< j' \in I^C$.
We view the intersection $\cI(\ul{L} ,\ul{H}) =\mtimes_{\{0,\ldots,r\},\ul{H}}\ul{L}$ 
as the full fiber product case $I=\{0,\ldots,r\}$.
\end{definition}

\begin{theorem}  \label{H trans}
There is a dense open subset 
$\Ham^*(\ul{L})\subset\cH_t(\ul{\delta})$ so that for every $\ul{H}\in\Ham^*(\ul{L})$ the defining equations for $\mtimes_{I,\ul{H}}\ul{L}$ for any $I\subset\{0,\ldots,r\}$ are transversal.
\end{theorem}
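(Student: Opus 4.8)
The plan is to prove this by a Sard--Smale argument, treating the Hamiltonian tuple $\ul{H}$ as the parameter and showing that the relevant ``universal'' intersection loci are cut out transversally, so that the regular values form a dense open (in fact residual, then open by compactness) subset. First I would fix a proper subset $I\subset\{0,\ldots,r\}$ and a consecutive pair $j<j'\in I^C$, so that the defining equation for the corresponding factor of $\mtimes_{I,\ul{H}}\ul{L}$ asks that the tuple $(\ell_{j(j+1)},\ldots,\ell_{(j'-1)j'})\in L_{j(j+1)}\times\cdots\times L_{(j'-1)j'}$ lie on the product of graphs $\graph(\phi_{\delta_{j+1}}^{H_{j+1}})\times\cdots\times\graph(\phi_{\delta_{j'-1}}^{H_{j'-1}})$ with free endpoints in $M_j$ and $M_{j'}$. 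The key observation is that the ``constraint'' maps $\phi^{H_k}_{\delta_k}$ for the interior indices $k\in\{j+1,\ldots,j'-1\}$ appear, and since each $H_k$ can be varied independently, it suffices to arrange transversality one geometric composition at a time; the case $j'=j+2$ (a single interior index, hence a single honest geometric composition $L_{j(j+1)}\circ_{H_{j+1}} L_{(j+1)(j+2)}$) is the base case, and longer chains follow by the same infinitesimal surjectivity applied index by index.

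The core analytic step is the standard one: on the universal space $\{(\ell,\ul{H})\}$ cut out by the $k$-th graph condition, show that the linearization of the defining equation in the $\ell$ and $H_k$ directions together is surjective. Concretely, at a point where $\phi^{H_k}_{\delta_k}(p)=q$ with $p=\pi_{M_k}(\ell_{(k-1)k})$ and $q=\pi_{M_k}(\ell_{k(k+1)})$, an infinitesimal variation $h_k$ of $H_k$ supported near the chord from $p$ to $q$ moves $\phi^{H_k}_{\delta_k}(p)$ by the Hamiltonian vector field of the generating function obtained by flowing $h_k$ — this is exactly the computation that makes the generators transversal in \cite{quiltfloer}, and the same computation shows one can hit any tangent vector at $q$ (or more precisely any vector in $T_qM_k$ modulo the images of the $TL$ seam conditions), which is all that is needed for transversality of the partial fiber product. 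One must be slightly careful that the chords for distinct interior indices $k$ are genuinely distinct points in distinct manifolds $M_k$, so the perturbations $h_k$ do not interfere — this is immediate since $M_{j+1},\ldots,M_{j'-1}$ are distinct factors and $\ul{H}$ ranges over the direct sum $\bigoplus_j\cC^\infty([0,\delta_j]\times M_j,\R)$.

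Having established surjectivity of the universal linearized operator, the Sard--Smale theorem (applied on a separable Banach manifold completion, e.g.\ using $\cC^\varepsilon$ Hamiltonians and then passing to $\cC^\infty$ via the usual Taubes trick) gives a residual set of $\ul{H}$ for which the fiber equation is transversal, for this fixed $I$ and this fixed consecutive pair. Intersecting over the finitely many choices of $I$ and of consecutive pairs $j<j'\in I^C$ — there are only finitely many — yields a residual, hence dense, subset of $\cH_t(\ul{\delta})$ on which \emph{all} the defining equations for \emph{all} the $\mtimes_{I,\ul{H}}\ul{L}$ are simultaneously transversal. Openness follows because transversality of a finite system of equations defined using compact Lagrangians $L_{j(j+1)}$ and smoothly-varying flows is an open condition in $\ul{H}$: a sequence of non-transverse points would, by compactness of the $L$'s, subconverge to a non-transverse point for the limiting $\ul{H}$. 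Defining $\Ham^*(\ul{L})$ to be this dense open set completes the proof.

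The step I expect to be the main obstacle is the universal surjectivity computation, and specifically making precise which quotient of $T_qM_k$ one needs to surject onto: one is not asking the graph condition alone to be transverse (it never is in isolation, since $\graph(\phi)$ is Lagrangian of half-dimension), but rather that the \emph{combined} system — product of $L$'s intersected with product of graphs, with free ends — be transverse, so the Hamiltonian variation only has to cover the ``missing'' directions not already supplied by the tangent spaces of the $L_{j(j+1)}$ and the adjacent free factors $M_j,M_{j'}$. Getting the bookkeeping of these tangent-space contributions right, and checking that a compactly-supported $h_k$ near the relevant chord really does produce an arbitrary enough endpoint variation (this uses that the time-$\delta_k$ flow map from variations of $H_k$ to $T_qM_k$ is surjective, a one-line consequence of the fact that $s\mapsto \phi^{H_k}_{\delta_k-s}$ conjugates the variation into a Hamiltonian vector field that can be prescribed freely at $q$), is where the real content of the theorem lies.
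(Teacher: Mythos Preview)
Your proposal is correct and follows essentially the same Sard--Smale argument as the paper: universal moduli space over $\cC^\ell$ Hamiltonians, surjectivity via the Hamiltonian variation of the time-$\delta_i$ flow, dense regular values, openness from compactness of the $L_{j(j+1)}$, and a finite intersection over subsets $I$. The obstacle you flag in your final paragraph in fact dissolves in the paper's setup: the linearized constraint at each interior index $i\in I$ has target simply $T_{m_i'}M_i$ (the quotient by the diagonal), and the map $h_i\mapsto D\phi_{\delta_i}^{H_i}(h_i,0)$ already surjects onto this on its own, so there is no need to track which directions are supplied by the $TL_{j(j+1)}$.
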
  

\begin{proof}
Each of the fiber products under consideration is of the following form: It is the set of tuples 
$(m'_0, m_1, m'_1, \ldots , m'_r, m_{r+1})\in L_{01}\times  \ldots\times L_{r(r+1)}$ satisfying 
\begin{equation}\label{kdefine}
 \phi_{\delta_i}^{H_i}(m_i) = m_i' \quad\forall i\in I .
\end{equation} 
It suffices to find a dense open subset of regular Hamiltonians for
each of these problems, since the intersection of finitely many dense
open subsets remains dense and open. So we fix some choice of
$I\subset\{0,\ldots,r\}$ and consider the universal moduli $\cM^{\on{univ}}$ space
of data $(H_0,\ldots,H_r,m_0',m_1,\ldots,m_r',m_{r+1})$ satisfying
\eqref{kdefine}, where now each $H_j$ has class $\cC^\ell$ for some
$\ell > \sum_{i\in I^C}\dim M_i$.  It is cut out by the diagonal
values of the $\cC^\ell$-map
\begin{align*}
 L_{01}\times L_{12}\ldots \times L_{r(r+1)} \times 
 \bigoplus_{k=0}^r \cC^\ell([0,\delta_k] \times M_k)
&\longrightarrow \bigoplus_{j\in I} M_j\times M_j, \\ 
(m_0',\ldots,m_{r+1},H_0,\ldots,H_r)& \longmapsto ( \phi_{\delta_i}^{H_i}(m_i),m_i')_{i\in I} .
\end{align*}
The linearized equations for $\cM^{\on{univ}}$ are
\begin{equation} \label{surj} 
v_i' - D \phi_{\delta_i}^{H_i} (h_i,v_i) = 0 \in TM_i
\qquad \forall i \in I .
\end{equation}
for $v_i \in T_{m_i} M_i$, $v'_i \in T_{m'_i} M_i$,  and $h_i \in \cC^\ell([0,\delta_i] \times M_i)$.
The map
$$ \cC^\ell([0,\delta_i] \times M_i) \to T_{\phi_{\delta_i}^{H_i}(m_i)}
M_i, \quad h_i \mapsto D \phi_{\delta_i}^{H_i} (h_i,0) $$
is surjective, which shows that the product of the operators on the
left-hand side of \eqref{surj} is also surjective.
So by the implicit function theorem $\cM^{\on{univ}}$ is a $\cC^\ell$ Banach manifold, 
and we consider its projection to $\oplus_{k=0}^r \cC^\ell([0,\delta_k] \times M_k)$.  
This is a Fredholm map of class $\cC^\ell$ and index $\sum_{i\in I^C}\dim M_i$ (in particular $0$ for the full intersection $I=\{0,\ldots,r\}$).
Hence, by the Sard-Smale theorem, the set of regular values (which coincides with the set of functions $\ul{H}=(H_0,\ldots,H_r)$ such that the perturbed intersection is
transversal) is dense in $\oplus_{k=0}^r \cC^\ell([0,\delta_k] \times M_k)$.
Moreover, the set of regular values is open for each $\ell > \sum_{i\in I^C}\dim M_i$.
Indeed, by the compactness of $L_{01}\times L_{12}\ldots \times L_{r(r+1)}$, a $\cC^1$-small change in $\ul{H}$ leads to a small change in perturbed intersection points, with small change in the linearized operators.

Now, by approximation of $\cC^\infty$-functions with $\cC^\ell$-functions, the set of regular values in $\oplus_{k=0}^r \cC^\infty([0,\delta_k] \times M_k)$ is dense in the $\cC^\ell$-topology for all $\ell > \sum_{i\in I^C}\dim M_i$, and hence dense in the $\cC^\infty$-topology.
Finally, the set of regular smooth $\ul{H}$ is open in the $\cC^\infty$-topology as a special case of the $\cC^1$-openness.
\end{proof} 

We now reformulate this Theorem by using the Hamiltonian flows of $\ul{H}$ to perturb the Lagrangian correspondences and then applying a geometric composition in some factors.

\begin{corollary} \label{cor H trans}
For $\ul{H}\in{\rm Ham}^*(\ul{L})$ the perturbed generalized correspondence
$$
\ul{L}' := \Bigl( 
L':= \bigl({\rm Id_{M_j}}\times \phi_{\delta_{j+1}}^{H_{j+1}}\bigr)L_{j(j+1)} \Bigr)_{j=0,\ldots,r} 
$$ 
has the following intersection and composition properties:
\begin{enumerate}
\item
The generalized intersection
$$
\cI(\ul{L}',0)=
(L'_{01}\times\ldots\times L'_{r(r+1)})\cap ( \Delta_{M_1} \times \ldots \times \Delta_{M_0})^T
$$ 
is transverse and canonically identified with $\cI(\ul{L},\ul{H})$.
\item
For any proper subset $I\subset\{0,\ldots,r\}$ the partial fiber product $\mtimes_{I,0}\ul{L}'$
is cut out transversally (and canonically identified with $\mtimes_{I,\ul{H}}\ul{L}$). It is a product of the transverse intersections
\begin{align*}
&L'_{j(j+1)} \times_{\Delta_{j+1}} L'_{(j+1)(j+2)} \ldots \times_{\Delta_{j'-1}} L'_{(j'-1)j'} \\
&:= 
\bigl( L'_{j(j+1)} \times L'_{(j+1)(j+2)} \ldots \times L'_{(j'-1)j'} \bigr) \cap \bigl( M_j\times\Delta_{M_{j+1}} \ldots \Delta_{M_{j'-1}}\times M_{j'} \bigr) \\
& =
\bigl({\rm Id}_{M_j}\times \phi_{\delta_{j+1}}^{H_{j+1}}\times{\rm Id}_{M_{j+1}} \ldots\times \phi_{\delta_{j'-1}}^{H_{j'-1}} \times \phi_{\delta_{j'}}^{H_{j'}}\bigr)
\bigl( L_{j(j+1)} \times_{H_{j+1}} \ldots \times_{H_{j'-1}} L_{(j'-1)j'} \bigr) 
\end{align*}
for consecutive pairs of indices $j<j'\in I^C$.
\item
By a direct generalization of \cite[Lemma 2.0.5]{quiltfloer}, the projection 
$$
\Pi_{M_j\times M_{j'}} : \; L'_{j(j+1)} \times_{\Delta_{j+1}} \ldots \times_{\Delta_{j'-1}} L'_{(j'-1)j'}
\; \longrightarrow \; M_j\times M_{j'}
$$
is an immersion onto the geometric composition 
$L'_{j(j+1)} \circ \ldots \circ L'_{(j'-1)j'} \subset M_j\times M_{j'}$.
\end{enumerate}
\end{corollary}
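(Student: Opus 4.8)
The plan is to derive the entire statement from Theorem~\ref{H trans} by a change of coordinates, the only genuinely new ingredient being the passage from one geometric composition to an iterated one in part~(c). First note that each $L'_{j(j+1)}=(\Id_{M_j}\times\phi_{\delta_{j+1}}^{H_{j+1}})L_{j(j+1)}$ is again a Lagrangian correspondence in $M_j^-\times M_{j+1}$, because $\Id_{M_j}\times\phi_{\delta_{j+1}}^{H_{j+1}}$ is a symplectomorphism; hence $\ul{L}'$ is a cyclic generalized Lagrangian correspondence and all objects in the statement are defined.

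For part~(a) I would introduce the diffeomorphism $F$ of the ambient product $M_0^-\times M_1\times M_1^-\times M_2\times\cdots\times M_r^-\times M_0$ that acts by $\phi_{\delta_i}^{H_i}$ on the ``incoming'' copy of $M_i$ (the right-hand factor of $L_{(i-1)i}$) and by the identity on the ``outgoing'' copy of $M_i$ (the left-hand factor of $L_{i(i+1)}$), for each $i$. On the factor $L_{i(i+1)}\subset M_i^-\times M_{i+1}$ it restricts to $\Id_{M_i}\times\phi_{\delta_{i+1}}^{H_{i+1}}$, so $F$ carries $L_{01}\times\cdots\times L_{r(r+1)}$ onto $L'_{01}\times\cdots\times L'_{r(r+1)}$; and since $F$ sends the incoming value $x_i(0)$ to $\phi_{\delta_i}^{H_i}(x_i(0))=x_i(\delta_i)$ while fixing the outgoing value $x_i(\delta_i)$, it turns the $i$-th graph condition $x_i(\delta_i)=\phi_{\delta_i}^{H_i}(x_i(0))$ in $(\graph(\phi_{\delta_1}^{H_1})\times\cdots\times\graph(\phi_{\delta_0}^{H_0}))^T$ into the $i$-th diagonal condition in $(\Delta_{M_1}\times\cdots\times\Delta_{M_0})^T$. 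Hence $F$ restricts to a bijection $\cI(\ul{L},\ul{H})\to\cI(\ul{L}',0)$, which precomposed with the canonical identification of $\cI(\ul{L},\ul{H})$ with tuples of Hamiltonian chords yields the asserted identification; and since $F$ is a diffeomorphism, the transversality of $\cI(\ul{L},\ul{H})$ from the case $I=\{0,\ldots,r\}$ of Theorem~\ref{H trans} is carried over to $\cI(\ul{L}',0)$.

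For part~(b) I would argue block by block. The complement $I^C$ breaks up cyclically into its consecutive pairs $j<j'$, and the correspondences $L_{j(j+1)},\ldots,L_{(j'-1)j'}$ belonging to distinct blocks involve disjoint tuples of factors (a shared endpoint $M_{j'}$ with $j'\in I^C$ occurs in two separate unconstrained copies), so $\mtimes_{I,0}\ul{L}'$ is literally the product of the block intersections, and it suffices to treat one block from $j$ to $j'$. There one applies the symplectomorphism of $M_j^-\times M_{j+1}\times M_{j+1}^-\times\cdots\times M_{j'-1}^-\times M_{j'}$ acting by the identity on the factors $M_j^-,M_{j+1}^-,\ldots,M_{j'-1}^-$ and by $\phi_{\delta_{j+1}}^{H_{j+1}},\ldots,\phi_{\delta_{j'}}^{H_{j'}}$ on the factors $M_{j+1},\ldots,M_{j'}$; exactly as in part~(a) this maps $L_{j(j+1)}\times_{H_{j+1}}\cdots\times_{H_{j'-1}}L_{(j'-1)j'}$ onto $L'_{j(j+1)}\times_{\Delta_{j+1}}\cdots\times_{\Delta_{j'-1}}L'_{(j'-1)j'}$ (which is the displayed formula) and the graph conditions onto the diagonal conditions. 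Transversality and the identification with $\mtimes_{I,\ul{H}}\ul{L}$ then follow from Theorem~\ref{H trans} applied to this $I$.

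For part~(c), part~(b) makes the domain of $\Pi_{M_j\times M_{j'}}$ a smooth manifold, and it remains to extend \cite[Lemma 2.0.5]{quiltfloer} from a single geometric composition to an iterated one; I would do this by induction on $j'-j$, using at each step the transversality, also supplied by part~(b), of the relevant sub-block fiber product. The base case is \cite[Lemma 2.0.5]{quiltfloer} itself, whose mechanism is that for a transverse fiber product $A\times_M B$ of Lagrangian correspondences $A\subset M_A^-\times M$, $B\subset M^-\times M_B$ the kernel of the projection to $M_A\times M_B$ is the symplectic orthogonal complement in $T_mM$ of ${\Pr}_{T_mM}(T_aA)+{\Pr}_{T_mM}(T_bB)$, while transversality says precisely that this sum is all of $T_mM$, so the kernel is trivial, the projection is an immersion, and its image is by definition $A\circ B$. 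Hence $\Pi_{M_j\times M_{j'}}$ is an immersion onto $L'_{j(j+1)}\circ\cdots\circ L'_{(j'-1)j'}$. I expect the only real work to lie in the index bookkeeping for parts~(a) and~(b)---keeping track of which copy of each $M_i$ the flow $\phi_{\delta_i}^{H_i}$ acts on, and checking that the exchange of factors $(\ldots)^T$ genuinely converts graph conditions into diagonal conditions---whereas part~(c) is in essence the cited lemma.
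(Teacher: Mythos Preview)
Your proposal is correct and matches what the paper intends: the paper gives no separate proof of this corollary, treating it as an immediate reformulation of Theorem~\ref{H trans} with the change-of-coordinates identity built into the displayed formula of part~(b) and part~(c) reduced to the cited lemma. Your write-up simply spells out this diffeomorphism argument and the iterated-composition step, filling in what the paper leaves to the reader.
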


We will in particular be interested in this composition near a fixed point in $M_{j+1}\times\ldots\times M_{j'-1}$ given by the components of an intersection point in $\cI(\ul{L}',0)$. For any such point there is a neighbourhood $\cU\subset M_{j+1}\times\ldots\times M_{j'-1}$ such that the projection $\Pi_{M_j\times M_{j'}}$ embeds $\bigl( L'_{j(j+1)} \times_{\Delta_{j+1}} L'_{(j+1)(j+2)} \ldots \times_{\Delta_{j'-1}} L'_{(j'-1)j'} \bigr) \cap \bigl(  M_j \times \cU \times M_{j'} \bigr)$ into $M_j \times M_{j'}$.
This is a localized version of the embedded geometric composition
(as studied in \cite{quiltfloer}) of the perturbed Lagrangians.
We will be using the following analogue perturbed geometric composition of the unperturbed Lagrangian correspondences.

\begin{definition} 
For a proper subset $I\subset\{0,\ldots,r\}$ and $\ul{x}\in\cI(\ul{L},\ul{H})$ we define the locally composed cyclic Lagrangian correspondence $\ul{L}^{I,\ul{H},\ul{x}}$ between the underlying manifolds $(M_j)_{j\in I^C}$ to be the cyclic sequence consisting of 
$L_{jj'}^{\ul{H},\ul{x}}\subset M_j\times M_{j'}$ for each consecutive pair of indices $j< j' \in I^C$, given by 
\begin{align*}
L_{jj'}^{\ul{H},\ul{x}} 
&:=
\Pi_{M_j\times M_{j'}}\bigl(\bigl(L_{j(j+1)} \times_{H_{j+1}} L_{(j+1)(j+2)} \ldots \times_{H_{j'-1}} L_{(j'-1)j'}\bigr) \cap \tilde\cU_{\ul{x},j,j'} \bigr) 
\end{align*}
for $\tilde\cU_{\ul{x},j,j'}:=M_j \times \cU_{\ul{x},j,j'} \times M_{j'}$, 
where $ \cU_{\ul{x},j,j'}$ is a neighbourhood of $\big(x_{j+1}(0),x_{j+1}(\delta_{j+1}), \ldots,\linebreak[2] x_{j'-1}(0),x_{j'-1}(\delta_{j'-1})\bigr)$ such that $\Pi_{M_j\times M_{j'}}$ is injective on the intersection.
\end{definition}

\begin{remark}
Given a regular tuple of Hamiltonian functions $\ul{H}\in{\rm Ham}^*(\ul{L})$ as in Theorem~\ref{H trans} and a proper subset $I\subset\{0,\ldots,r\}$, let
$\ul{\delta}^I:=(\delta_j)_{j\in I^C}$ and $\ul{H}^I:=(H_j)_{j\in I^C}$.
Then the transversality assertions of Theorem~\ref{H trans} moreover imply that for any $\ul{x}\in\cI(\ul{L},\ul{H})$ the intersection
$\cI(\ul{L}^{I,\ul{H},\ul{x}}, \ul{H}^I)$ is transverse.
It contains $(x_j)_{j\in I^C}$, and no other points if the neighbourhoods $\cU_{\ul{x},j,j'}$ are chosen sufficiently small.
\end{remark}

In preparation for the analysis of quilted Floer trajectories with
constant components, we next study the lift from
$L_{jj'}^{\ul{H},\ul{x}}$ to $M_{j+1}\times M_{j'-1}$ and its
connection with the intersections 
$TL_{j(j+1)} \cap (\{0\}\times T M_{j+1})$ and $TL_{(j'-1)j'}\cap (T
M_{j'-1}\times\{0\})$. 
A priori, the latter are isomorphic to collections of isotropic subspaces of $TM_{j+1}$ resp.\ $TM_{j'-1}$ parametrized by $L_{j(j+1)}$ resp.\ $L_{(j'-1)j'}$.
As mentioned in the introduction, a first step is to understand
the locus where these subspaces are Lagrangian, and how they may vary along $L_{jj'}^{\ul{H},\ul{x}}$. For that purpose we introduce the following notation.

\begin{definition} \label{def P}
Let $j\lhd j'$ be a pair of indices.
\begin{enumerate}
\item 
We denote by $\cS_{jj'} \subset L_{j(j+1)}\times L_{(j'-1)j'}$ the set of points $\ul{q}=(q_j,q_{j+1},q_{j'-1},q_{j'})$ for which 
$$
\Lambda_{(j+1)(j'-1)}(\ul{q}):= T_{\ul{q}} (L_{j(j+1)}\times L_{(j'-1)j'}) \cap T_{\ul{q}}( \{q_j\}\times M_{j+1}\times M_{j'-1}\times\{q_{j'}\}\bigr)
$$
induces a Lagrangian subspace in $T_{q_{j+1}} M_{j+1}\times T_{q_{j'-1}} M_{j'-1}$ (with the appropriate signs on the symplectic forms).
\item 
Given moreover $\ul{H}\in{\rm Ham}^*(\ul{L})$, $\ul{x}\in\cI(\ul{L},\ul{H})$, we denote by 
$$ 
\cP_{jj'} : \; L_{jj'}^{\ul{H},\ul{x}}  \; \longrightarrow \; M_{j+1}\times M_{j'-1}
$$ 
the composition of the lift from $L_{jj'}^{\ul{H},\ul{x}}$ to $\bigl(L_{j(j+1)} \times_{H_{j+1}} \ldots \times_{H_{j'-1}} L_{(j'-1)j'}\bigr) \cap \tilde\cU_{\ul{x},j,j'}$ and the projection to the second and penultimate component, i.e.\ to a neighbourhood of $(x_{j+1}(0),x_{j'-1}(\delta_{j'-1}))$.
\end{enumerate}
\end{definition}

The following Proposition shows that the set $\cS_{jj'}$ can equivalently be defined as the locus where the linearized Lagrangian correspondences split, and that this splitting locus is closely related to the vanishing of $D\cP_{jj'}$.

\begin{proposition} \label{prop L}
The following holds for any pair of indices $j\lhd j'$.
\begin{enumerate}
\item
$\Lambda_{(j+1)(j'-1)}(\ul{q})$ is Lagrangian if and only if we have splittings
$$
T_{(q_j, q_{j+1})} L_{j(j+1)} = \Lambda_j \times \Lambda_{j+1} , \qquad
T_{(q_{j'-1},q_{j'})} L_{(j'-1)j'} = \Lambda_{j'-1} \times \Lambda_{j'}
$$
into Lagrangian subspaces
$$
\Lambda_j = {\Pr}_{TM_{j}} \bigl( T_{(q_j, q_{j+1})} L_{j(j+1)} \cap (T_{q_j} M_{j}\times \{0\})\bigr),
$$
$$
\Lambda_{j+1} = {\Pr}_{TM_{j+1}} \bigl( T_{(q_j, q_{j+1})} L_{j(j+1)} \cap (\{0\}\times T_{q_{j+1}} M_{j+1}) \bigr),
$$ 
$$
\Lambda_{j'-1}= {\Pr}_{TM_{j'-1}} \bigl( T_{(q_{j'-1}, q_{j'})} L_{(j'-1)j'} \cap 
(T_{q_{j'-1}} M_{j'-1}\times \{0\}) \bigr) ,
$$
$$
\Lambda_{j'}= {\Pr}_{TM_{j'}} \bigl( T_{(q_{j'-1}, q_{j'})} L_{(j'-1)j'} \cap (\{0\}\times T_{q_{j'}} M_{j'}) \bigr) .
$$
\item
The subset $\cS_{jj'} \subset L_{j(j+1)}\times L_{(j'-1)j'}$ is compact.
\item 
For any $\ul{H}\in{\rm Ham}^*(\ul{L})$, $\ul{x}\in\cI(\ul{L},\ul{H})$ the linearization 
$D_{(q_j,q_{j'})}\cP_{jj'}: T_{(q_j,q_{j'})} L_{jj'}^{\ul{H},\ul{x}} \to T_{\cP_{jj'}(q_j,q_{j'})} (M_{j+1}\times M_{j'-1})$ is trivial iff $(q_j,\cP_{jj'}(q_j,q_{j'}),q_{j'})\in  \cS_{jj'}$.
\end{enumerate}
\end{proposition}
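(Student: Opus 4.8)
The plan is to prove the three parts in order, with part (a) being linear-algebraic, part (b) following from (a) by a compactness argument, and part (c) being the geometric heart of the statement.

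\medskip\noindent\textbf{Part (a).} This is pure symplectic linear algebra. Write $V:=T_{q_{j+1}}M_{j+1}$ and $W:=T_{q_{j'-1}}M_{j'-1}$, let $L:=T_{(q_j,q_{j+1})}L_{j(j+1)}\subset T_{q_j}M_j^-\times V$ and $L':=T_{(q_{j'-1},q_{j'})}L_{(j'-1)j'}\subset W^-\times T_{q_{j'}}M_{j'}$. The space $\Lambda_{(j+1)(j'-1)}(\ul q)$ is, under the obvious identification, the direct sum of $\Lambda_{j+1}$ (the $V$-projection of $L\cap(\{0\}\times V)$) and the analogue $\Lambda_{j'-1}'$ for $W$, sitting inside $V\times W^-$. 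Since $L\cap(\{0\}\times V)$ is already isotropic in $\{0\}\times V$, its projection $\Lambda_{j+1}$ is an isotropic subspace of $V$; similarly $\Lambda_{j'-1}'$ in $W$. Their direct sum is Lagrangian in $V\times W^-$ iff each summand is Lagrangian in its factor, i.e.\ iff $\dim\Lambda_{j+1}=\tfrac12\dim V$ and $\dim\Lambda_{j'-1}'=\tfrac12\dim W$. Now the key point: for a Lagrangian $L\subset M_j^-\times M_{j+1}$, the subspace $\Lambda_{j+1}=\Pr_V(L\cap(\{0\}\times V))$ is Lagrangian in $V$ iff $L$ splits as $\Lambda_j\times\Lambda_{j+1}$ with $\Lambda_j=\Pr_{T_{q_j}M_j}(L\cap(T_{q_j}M_j\times\{0\}))$. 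One direction is immediate; for the other, a dimension count shows $\dim\Lambda_j\le\tfrac12\dim M_j$ always, the Lagrangian condition forces $\dim\Lambda_{j+1}=\tfrac12\dim M_{j+1}$ hence $\dim\Lambda_j=\tfrac12\dim M_j$, and then isotropy of $L$ forces $L=\Lambda_j\times\Lambda_{j+1}$. I would state this as a small lemma and apply it to $L$ and to $L'$.

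\medskip\noindent\textbf{Part (b).} Since $L_{j(j+1)}\times L_{(j'-1)j'}$ is compact, it suffices to show $\cS_{jj'}$ is closed. The function $\ul q\mapsto\dim\Lambda_{(j+1)(j'-1)}(\ul q)$ is upper semicontinuous (the intersection of a continuously varying subspace with a fixed one can only jump up), while $\ul q\mapsto\dim\Pr(\Lambda_{(j+1)(j'-1)}(\ul q))$ is also upper semicontinuous. Using part (a), $\cS_{jj'}$ is exactly the locus where $\dim\Lambda_{j+1}(\ul q)+\dim\Lambda_{j'-1}'(\ul q)$ equals its maximal possible value $\tfrac12(\dim M_{j+1}+\dim M_{j'-1})$, an upper-semicontinuous-valued quantity attaining its max on a closed set. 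Hence $\cS_{jj'}$ is closed, therefore compact.

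\medskip\noindent\textbf{Part (c).} This is where the work lies. Fix $\ul H\in\Ham^*(\ul L)$, $\ul x$, and a point $p=(q_j,q_{j'})\in L_{jj'}^{\ul H,\ul x}$; let $\ul q\in\bigl(L_{j(j+1)}\times_{H_{j+1}}\ldots\times_{H_{j'-1}}L_{(j'-1)j'}\bigr)\cap\tilde\cU_{\ul x,j,j'}$ be its (unique) lift, with $\cP_{jj'}(p)=(q_{j+1},q_{j'-1})$. By Corollary~\ref{cor H trans} the fiber product is cut out transversally, so its tangent space at $\ul q$ is the space of tuples $(v_j,v_{j+1},\ldots,v_{j'})$ with $v_i\in T L_{i(i+1)}$ and $v_{i}=D\phi_{\delta_i}^{H_i}(v_{i})$ matching across each $M_i$ for $i=j+1,\ldots,j'-1$ (appropriately read). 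The linearization $D\cP_{jj'}$ sends such a tuple to $(v_{j+1}^{\mathrm{in}},v_{j'-1}^{\mathrm{out}})$, the $M_{j+1}$- and $M_{j'-1}$-components at the appropriate seams; and $D\Pi_{M_j\times M_{j'}}$ (which is injective on this space, by the localization making $\Pi$ an embedding) sends it to $(v_j,v_{j'})$. Thus $D\cP_{jj'}=0$ at $p$ means: every tangent vector to the fiber product projecting to $(v_j,v_{j'})$ has vanishing $M_{j+1}$- and $M_{j'-1}$-components; equivalently, the tangent space of $L_{jj'}^{\ul H,\ul x}$ at $p$ consists precisely of those $(v_j,v_{j'})$ that extend to a fiber-product tangent vector with zero components in the intermediate patches.

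Now unwind what that extension condition is. A fiber-product tangent vector with $v_{j+1}^{\mathrm{in}}=0$ has, by the graph conditions, $v_{j+1}^{\mathrm{out}}=D\phi_{\delta_{j+1}}^{H_{j+1}}(0)=0$, hence $v_{j+2}^{\mathrm{in}}=0$, and inductively all intermediate components vanish — so the ``zero in intermediate patches'' condition is automatically propagated once imposed at one seam, and it forces $v_j\in\Pr_{TM_j}(TL_{j(j+1)}\cap(TM_j\times\{0\}))=\Lambda_j$ and $v_{j'}\in\Lambda_{j'}$. Therefore $D\cP_{jj'}=0$ at $p$ holds iff $T_p L_{jj'}^{\ul H,\ul x}\subseteq\Lambda_j\times\Lambda_{j'}$. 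But $L_{jj'}^{\ul H,\ul x}$ is a Lagrangian submanifold of $M_j^-\times M_{j'}$, so $\dim T_p L_{jj'}^{\ul H,\ul x}=\tfrac12(\dim M_j+\dim M_{j'})$, while $\dim\Lambda_j\times\Lambda_{j'}\le\tfrac12(\dim M_j+\dim M_{j'})$ with equality iff both $\Lambda_j,\Lambda_{j'}$ are Lagrangian. Hence the inclusion is equality, and it forces $\Lambda_j,\Lambda_{j'}$ Lagrangian, which by part (a) (applied with $(q_j,q_{j+1})$ on $L_{j(j+1)}$ and $(q_{j'-1},q_{j'})$ on $L_{(j'-1)j'}$) is equivalent to $\Lambda_{(j+1)(j'-1)}(\ul q)$ being Lagrangian, i.e.\ to $(q_j,\cP_{jj'}(p),q_{j'})\in\cS_{jj'}$. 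Conversely, if that point lies in $\cS_{jj'}$, part (a) gives the splittings, so $\Lambda_j\times\Lambda_{j'}$ is Lagrangian, and the image of $D\cP_{jj'}$ — being the $(v_{j+1},v_{j'-1})$-part of fiber-product tangent vectors over $\Lambda_j\times\Lambda_{j'}$ — one checks vanishes because the splitting makes every tangent vector of $L_{j(j+1)}$ over $\Lambda_j$ have zero $M_{j+1}$-component and dually for $L_{(j'-1)j'}$, and then the propagation argument kills the rest.

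\medskip\noindent The main obstacle I expect is part (c): specifically, being careful with the identification of $T_p L_{jj'}^{\ul H,\ul x}$ via $D\Pi_{M_j\times M_{j'}}$ with a subspace of the fiber-product tangent space, keeping track of the two boundary-value ``in'' and ``out'' copies in each intermediate $M_i$ and of the Hamiltonian twist $D\phi_{\delta_i}^{H_i}$, and checking that the ``zero intermediate components'' condition really is equivalent to landing in $\Lambda_j\times\Lambda_{j'}$ rather than something smaller. The dimension-count step that upgrades the inclusion $T_p L_{jj'}^{\ul H,\ul x}\subseteq\Lambda_j\times\Lambda_{j'}$ to an equality — and hence extracts the Lagrangian property of the $\Lambda$'s — is the clean idea that makes the ``iff'' work in both directions.
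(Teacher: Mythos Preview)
Your approach matches the paper's for all three parts: (a) via the dimension count on isotropic slices of a Lagrangian, (b) via closedness of a maximum-dimension locus, and (c) via the dimension count $\dim T_pL_{jj'}^{\ul H,\ul x}=\tfrac12(\dim M_j+\dim M_{j'})\le\dim(\Lambda_j\times\Lambda_{j'})$ combined with injectivity of $D\Pi_{M_j\times M_{j'}}$. The key idea you isolate at the end is exactly the paper's.

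There is, however, a genuine error in your propagation step in part (c), and your own ``main obstacle'' instinct was correct. You claim that $v_{j+1}^{\mathrm{out}}=0$ forces $v_{j+2}^{\mathrm{in}}=0$, but the seam constraint is only $(v_{j+1}^{\mathrm{out}},v_{j+2}^{\mathrm{in}})\in T L_{(j+1)(j+2)}$; with $v_{j+1}^{\mathrm{out}}=0$ this merely places $v_{j+2}^{\mathrm{in}}$ in the (generally nontrivial) isotropic slice ${\Pr}_{TM_{j+2}}\bigl(TL_{(j+1)(j+2)}\cap(\{0\}\times TM_{j+2})\bigr)$, not in $\{0\}$. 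For the forward direction this is harmless because the propagation is unnecessary: from $v_{j+1}^{\mathrm{in}}=0$ alone you get $(v_j,0)\in TL_{j(j+1)}$ hence $v_j\in\Lambda_j$, and from $v_{j'-1}^{\mathrm{out}}=0$ alone you get $v_{j'}\in\Lambda_{j'}$; the intermediate components need not vanish and play no role.

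Your converse argument also leans on this faulty propagation and, as written, does not go through: the splitting $TL_{j(j+1)}=\Lambda_j\times\Lambda_{j+1}$ does \emph{not} make the $M_{j+1}$-component of a general tangent vector vanish. The paper's (terse) argument instead uses the injectivity of $D\Pi_{M_j\times M_{j'}}$ on the fiber-product tangent space~\eqref{giant}: given any $\ul v=(v_j,v_{j+1},v'_{j+1},\ldots,v'_{j'-1},v_{j'})$ there, the splittings at the two ends guarantee that $\ul v':=(0,v_{j+1},v'_{j+1},\ldots,v'_{j'-1},0)$ also lies in \eqref{giant}; but $\ul v'$ projects to $(0,0)\in TM_j\times TM_{j'}$, so injectivity forces $\ul v'=0$, whence $v_{j+1}=0$ and $v'_{j'-1}=0$, i.e.\ $D\cP_{jj'}=0$. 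Replace your propagation by this ``zero the endpoints and invoke injectivity'' trick and your proof is complete.
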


\begin{proof}
By definition, $\Lambda_{(j+1)(j'-1)}(\ul{q})$ induces the (automatically isotropic) subspace $\Lambda_{j+1}\times\Lambda_{j'-1} \subset TM_{j+1}\times TM_{j'-1}$. It is Lagrangian iff
both factors have maximal dimension, i.e.\ are Lagrangian.
Moreover, since $TL_{j(j+1)}$ is Lagrangian, maximal dimension of its subspace $\Lambda_{j+1}$ directly implies maximal dimension of $\Lambda_{j}$, and vice versa, and analogously with 
$TL_{(j'-1)j'}$. This proves (a).

The splitting conditions in (a) can be phrased as intersections having maximal dimension, hence are preserved in a limit, so occur on a closed subset of $L_{j(j+1)}\times L_{(j'-1)j'}$. This shows that $\cS_{jj'}$ is closed, so (b) follows directly from the compactness of all Lagrangian submanifolds involved.

Given a point $(q_j,q_{j+1},q'_{j'-1},q_{j'})=(q_j,\cP_{jj'}(q_j,q_{j'}),q_{j'})$ there exist $q'_{j+1},q_{j+2},\ldots,q'_{j'-2},q_{j'-1}$ such that
$(q_j,q_{j+1},q'_{j+1},\ldots,q_{j'-1},q'_{j'-1},q_{j'})\in \bigl(L_{j(j+1)} \times_{H_{j+1}} \ldots \times_{H_{j'-1}} L_{(j'-1)j'}\bigr) \cap \tilde\cU_{\ul{x},j,j'}$. Now $D_{(q_j,q_{j'})}\cP_{jj'}$ is the composition of the lift from $T_{(q_j,q_{j'})} L_{jj'}^{\ul{H},\ul{x}}$ to 
\begin{align}
& T_{(q_j,q_{j+1})}L_{j(j+1)} \times \ldots\times T_{(q'_{j'-1},q_{j'})} L_{(j'-1)j'} \nonumber\\
& \cap \quad  T_{q_{j}}M_j \times\graph( d\phi_{\delta_{j+1}}^{H_{j+1}}(q_{j+1}) ) \times\ldots\times\graph( d\phi_{\delta_{j'-1}}^{H_{j'-1}}(q_{j'-1})  ) \times T_{q_{j'}}M_{j'} 
\label{giant}
\end{align}
and the projection to $T_{q_{j+1}}M_{j+1}\times T_{q'_{j'-1}} M_{j'-1}$. 
Hence $D_{(q_j,q_{j'})}\cP_{jj'}\equiv 0$ if and only if \eqref{giant} is a subset of 
$T_{q_{j}}M_j\times\{0\}\times T_{q'_{j+1}}M_{j+1}\times\ldots\times T_{q_{j'-1}}M_{j'-1} \times\{0\} \times T_{q_{j'}}M_{j'}$.
On the other hand, our choice of $\ul{H}$ guarantees that the projection of \eqref{giant} to $T_{q_{j}}M_{j}\times T_{q_{j'}} M_{j'}$ is injective with Lagrangian image $T_{(q_j,q_{j'})} L_{jj'}^{\ul{H},\ul{x}}$.
So if $D_{(q_j,q_{j'})}\cP_{jj'}\equiv 0$, then both intersections $T_{(q_j,q_{j+1})}L_{j(j+1)} \cap \{0\}\times T_{q_{j+1}} M_{j+1}$
and $T_{(q'_{j'-1},q_{j'})}L_{(j'-1)j'} \cap T_{q'_{j'-1}} M_{j'-1}\times\{0\}$ have maximal dimension. As above that implies $(q_j,\cP_{jj'}(q_j,q_{j'}),q_{j'})\in \cS_{jj'}$.
Conversely, if the latter is true, i.e.\ both
$T_{(q_j,q_{j+1})}L_{j(j+1)}=\Lambda_j\times\Lambda_{j+1}$ and $T_{(q'_{j'-1},q_{j'})} L_{(j'-1)j'}=\Lambda_{j'-1}\times\Lambda_{j'}$ are of split form, then the intersection with the graphs in \eqref{giant} cannot allow for any nonzero vectors in $\Lambda_{j+1}$ or  $\Lambda_{j'-1}$, and hence $D_{(q_j,q_{j'})}\cP_{jj'}\equiv 0$.
\end{proof}

In the following section we will ``generically`` exclude quilted Floer
trajectories with constant components of the following two types:
Firstly, those along whose seam values we have $D\cP_{jj'}\not\equiv 0$
somewhere; secondly, those along whose seam values $D\cP_{jj'}\equiv 0$ but
$\Lambda_{(j+1)(j'-1)}$ varies. This will only leave quilted Floer
trajectories with constant components, for which transversality
follows from transversality for the moduli space of the locally
composed cyclic Lagrangian correspondence.  These arguments require
the following understanding of the structure of the split form set
$\cS_{jj'}$, the variation of the intersection $\Lambda_{(j+1)(j'-1)}$, and the intersection of $\cS_{jj'}$ with lifts of the local compositions $L_{jj'}^{\ul{H},\ul{x}}$.

\begin{theorem}  \label{intersection trans}
The following intersection properties hold for any pair of indices
$j\lhd j'$.
\begin{enumerate}
\item
For any $\ul{q}\in \cS_{jj'}$ there exists an open neighbourhood $\cV_{\ul{q}} \subset L_{j(j+1)}\times L_{(j'-1)j'}$ and smooth functions $G_n: \cV_{\ul{q}} \to \R$ for $n=1,\ldots, N:=\frac{(\dim M_{j}+ \dim M_{j'})(\dim M_{j+1} + \dim M_{j'-1})}{4}$ such that
$$
\cS_{jj'}\cap\cV_{\ul{q}} = \bigcap_{n=1}^N G_n^{-1}(0)  .
$$
Moreover, if $\gamma:(-\eps,\eps)\to \cS_{jj'} \cap \bigl(M_{j}\times \{q_{j+1}\} \times \{q_{j'-1}\}\times  M_{j'}\bigr)$ is a smooth path with $dG_n(\gamma)\equiv 0$ for all $n=1,\ldots N$, then $\Lambda_{(j+1)(j'-1)} (\gamma(t))$ is constant in $t\in(-\eps,\eps)$ as subspace of $T_{q_{j+1}}M_{j+1}\times T_{q_{j'-1}}M_{j'-1}$.
\item
Fix a finite open cover $\cS_{jj'}\subset\bigcup_{\ul{q}\in S_{jj'}} \cV_{\ul{q}}$ by subsets as in (b) with $S_{jj'}\subset\cS_{jj'}$ finite.
Then there is a dense open subset $\cH_{jj'}(\ul{L})\subset\Ham^*(\ul{L})$ such that the following holds: For every $\ul{H}\in\cH_{jj'}(\ul{L})$, $\ul{x}\in\cI(\ul{L},\ul{H})$, $\ul{q}\in S_{jj'}$, and $1\leq n\leq N$ the function $G_{\ul{q},n}^{\ul{H},\ul{x}}: \cV^{\ul{H},\ul{x}}_{\ul{q},n} \to \R$, $(z_j,z_{j'})\mapsto G_n (z_j,\cP_{jj'}(z_j,z_{j'}),z_{j'})$ defined on the open set
$$
\cV^{\ul{H},\ul{x}}_{\ul{q},n}:=\bigl\{ (z_j,z_{j'})\in L_{jj'}^{\ul{H},\ul{x}} \,\big|\, (z_j,\cP_{jj'}(z_j,z_{j'}),z_{j'})\in \cV_{\ul{q}}, \; d G_n (z_j,\cP_{jj'}(z_j,z_{j'}),z_{j'}) \neq 0 \bigr\}
$$
is transverse to $0$.
\end{enumerate}
\end{theorem}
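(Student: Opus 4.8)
The plan is to prove (a) by reducing $\cS_{jj'}$ to the vanishing locus of the components of a bundle map, and to prove (b) by a Sard--Smale argument over Hamiltonian perturbations, building on Theorem~\ref{H trans}. For part (a), I would fix $\ul q\in\cS_{jj'}$ and work in a neighbourhood $\cV_{\ul q}$ of $\ul q$ in $L_{j(j+1)}\times L_{(j'-1)j'}$. Over $\cV_{\ul q}$ the tangent spaces $T_{(q_j,q_{j+1})}L_{j(j+1)}$ and $T_{(q_{j'-1},q_{j'})}L_{(j'-1)j'}$ vary smoothly, and near $\ul q$ the dimension of the isotropic space $\Lambda_{j+1}$ can only drop (upper semicontinuity), so by Proposition~\ref{prop L}(a) the locus where it is Lagrangian -- equivalently $\cS_{jj'}$ -- is where this dimension is maximal. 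Concretely I would trivialize the relevant bundles and describe $L_{j(j+1)}$ near $(q_j,q_{j+1})$ as the graph of a smooth family of linear maps; then the condition ``$\Lambda_{j+1}$ is Lagrangian'' is the vanishing of an explicit bilinear form, or more cleanly, $T_{(q_j,q_{j+1})}L_{j(j+1)}=\Lambda_j\times\Lambda_{j+1}$ holds iff a certain ``off-diagonal block'' of the generating map of $L_{j(j+1)}$ vanishes at that point, and similarly for $L_{(j'-1)j'}$. The number of scalar entries in these blocks is exactly $N=\tfrac14(\dim M_j+\dim M_{j'})(\dim M_{j+1}+\dim M_{j'-1})$: each factor contributes $\tfrac12\dim M_j\cdot\tfrac12\dim M_{j+1}$ (the block of the splitting of a Lagrangian in $M_j^-\times M_{j+1}$ has that many entries), and likewise for the $j'$ factor. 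Collecting these entries as $G_1,\dots,G_N:\cV_{\ul q}\to\R$ gives $\cS_{jj'}\cap\cV_{\ul q}=\bigcap_n G_n^{-1}(0)$. For the second assertion of (a): along a path $\gamma$ in $\cS_{jj'}$ within a fixed slice $M_j\times\{q_{j+1}\}\times\{q_{j'-1}\}\times M_j$, the space $\Lambda_{(j+1)(j'-1)}(\gamma(t))$ lives in the \emph{fixed} symplectic vector space $T_{q_{j+1}}M_{j+1}\times T_{q_{j'-1}}M_{j'-1}$, and by (a) it equals $\Lambda_{j+1}(t)\times\Lambda_{j'-1}(t)$ where $\Lambda_{j+1}(t)$ is the projection of $T_{(\gamma(t),q_{j+1})}L_{j(j+1)}\cap(\{0\}\times T_{q_{j+1}}M_{j+1})$. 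The derivative of this Lagrangian-valued path is governed by the derivative of the generating map of $L_{j(j+1)}$ restricted to $\Lambda_{j+1}(t)$; one checks that this variation is, up to the known identifications, exactly a linear combination of the $dG_n(\gamma)$ together with the ``diagonal block'' derivatives which automatically vanish on the slice because $q_{j+1},q_{j'-1}$ are held fixed. Hence $dG_n(\gamma)\equiv0$ for all $n$ forces $\dot\Lambda_{(j+1)(j'-1)}(\gamma)\equiv0$, i.e.\ the family is constant.

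For part (b), having fixed the finite cover $\{\cV_{\ul q}\}_{\ul q\in S_{jj'}}$, it suffices (finite intersections of dense open sets are dense open) to produce for each pair $(\ul q,n)$ a dense open $\cH_{\ul q,n}\subset\Ham^*(\ul L)$ achieving transversality to $0$ of $G^{\ul H,\ul x}_{\ul q,n}$ for all $\ul x\in\cI(\ul L,\ul H)$ simultaneously; this works because $\cI(\ul L,\ul H)$ is a finite set varying continuously in $\ul H$ (Corollary~\ref{cor H trans}(a) and compactness), so one argues for a fixed intersection point and then takes a finite intersection over the (locally constant number of) points, which remains open. I would set up the universal moduli space
\[
\cM^{\mathrm{univ}}_{\ul q,n}:=\bigl\{(\ul H,z_j,z_{j'}) : \ul H\in\Ham^*(\ul L),\ (z_j,z_{j'})\in\cV^{\ul H,\ul x}_{\ul q,n},\ G^{\ul H,\ul x}_{\ul q,n}(z_j,z_{j'})=0\bigr\}
\]
with $\ul H$ of class $\cC^\ell$ for $\ell$ large, and show the linearization is surjective: the key computation is that varying $H_{j+1},\dots,H_{j'-1}$ moves the lift $(z_j,\cP_{jj'}(z_j,z_{j'}),z_{j'})$ through the graphs $\graph(\phi^{H_i}_{\delta_i})$ freely enough to change the point $(z_j,\cP_{jj'}(\ldots),z_{j'})\in\cV_{\ul q}$ in any direction transverse to $\cS_{jj'}$, while the condition $dG_n\neq0$ on $\cV^{\ul H,\ul x}_{\ul q,n}$ guarantees that a change in that point produces a change in $G_n$. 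More precisely, surjectivity of the map $\cC^\ell([0,\delta_i]\times M_i)\to T_{z_i}M_i$, $h_i\mapsto D\phi^{H_i}_{\delta_i}(h_i,0)$ (as used in the proof of Theorem~\ref{H trans}) lets us perturb the $M_{j+1}$- and $M_{j'-1}$-components of the lifted point arbitrarily without disturbing the endpoints $z_j,z_{j'}$; composing with $dG_n\neq0$ gives surjectivity of the linearized $G^{\ul H,\ul x}_{\ul q,n}$ onto $\R$. By the implicit function theorem $\cM^{\mathrm{univ}}_{\ul q,n}$ is a $\cC^\ell$ Banach manifold, its projection to the space of $\cC^\ell$ Hamiltonians is Fredholm of negative index (since $\dim L_{jj'}^{\ul H,\ul x}=\tfrac12\sum_{i\in I^C,\,i\in\{j,j'\}}\dim M_i < N$ in the generic situation — and in any case the index is $\dim L_{jj'}^{\ul H,\ul x}-N$, which we arrange to be negative, or simply invoke that a regular value is a Hamiltonian making $G^{\ul H,\ul x}_{\ul q,n}$ transverse to $0$ regardless of sign of the index), and Sard--Smale gives a residual -- hence dense -- set of regular values; density in the $\cC^\infty$-topology and openness follow by the same approximation and compactness arguments as in the proof of Theorem~\ref{H trans}.

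The main obstacle I anticipate is the surjectivity of the linearized universal operator in (b), specifically verifying that perturbations of the interior Hamiltonians $H_{j+1},\dots,H_{j'-1}$ really do move the lifted point $(z_j,\cP_{jj'}(z_j,z_{j'}),z_{j'})$ freely in its $M_{j+1}\times M_{j'-1}$-components while leaving $(z_j,z_{j'})$ -- and the constraint that the whole tuple lie in the partial fiber product -- intact; this requires care because the lift is only defined implicitly through the transverse fiber product of Corollary~\ref{cor H trans}(b), so one must argue on the level of the giant intersection \eqref{giant} that the combined map (vary $\ul H$, stay in the fiber product, read off the $M_{j+1}$- and $M_{j'-1}$-components) is a submersion. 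A secondary technical point is ensuring the cover $\{\cV_{\ul q}\}$ and the functions $G_n$ can be chosen compatibly so that the sets $\cV^{\ul H,\ul x}_{\ul q,n}$ and their defining inequality $dG_n\neq0$ behave continuously in $\ul H$; this is where the openness part of the Sard--Smale conclusion needs the compactness of the Lagrangians together with the fact that $\cS_{jj'}$ is compact (Proposition~\ref{prop L}(b), restated as Theorem~\ref{intersection trans} is being proved, so one uses the already-established Proposition~\ref{prop L}).
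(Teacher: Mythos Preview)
Your overall architecture matches the paper's: a generating-function description of $\cS_{jj'}$ for (a), and a Sard--Smale argument over Hamiltonians for (b). Two points deserve correction.

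\textbf{Part (a), the count of $N$.} Your factor-by-factor count gives
\[
\tfrac12\dim M_j\cdot\tfrac12\dim M_{j+1}+\tfrac12\dim M_{j'-1}\cdot\tfrac12\dim M_{j'},
\]
which is \emph{not} the stated $N=\tfrac14(\dim M_j+\dim M_{j'})(\dim M_{j+1}+\dim M_{j'-1})$. The paper instead treats $L_{j(j+1)}\times L_{(j'-1)j'}$ as a \emph{single} Lagrangian in the product and writes it locally as $\operatorname{graph}dF$ for one function $F:K_0\times\Lambda_0\to\R$ with $K_0=\Lambda_j^0\times\Lambda_{j'}^0$, $\Lambda_0=\Lambda_{j+1}^0\times\Lambda_{j'-1}^0$; the $G_n$ are then the mixed second partials $\partial^2F/\partial y_i\partial x_\kappa$, all $k\ell=N$ of them. (Your smaller collection would still cut out $\cS_{jj'}$, since the cross-block partials vanish identically for a product Lagrangian, but it does not match the theorem as stated.) The constancy assertion in (a) then becomes transparent: along a path with $\ul y\equiv0$, the Lagrangian $\Lambda_{(j+1)(j'-1)}$ is encoded by $\partial^2F/\partial y_i\partial y_l$, and the hypothesis $dG_n\equiv0$ in particular kills $\partial_{y_l}G_n=\partial^3F/\partial y_l\partial y_i\partial x_\kappa$, which is exactly the $t$-derivative of that Hessian block. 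Your sketch gestures at this but does not identify that it is the $y$-components of $dG_n$ (not the directional derivative along $\dot\gamma$) that are doing the work.

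\textbf{Part (b), surjectivity.} Your proposed mechanism --- vary $H_{j+1},\ldots,H_{j'-1}$ to move the $M_{j+1}\times M_{j'-1}$ components of the lift while keeping $(z_j,z_{j'})$ fixed --- does not work. The component $z_{j+1}$ is pinned by the $H$-independent constraint $(z_j,z_{j+1})\in L_{j(j+1)}$; with $z_j$ fixed, variations of $z_{j+1}$ are confined to the isotropic slice $\Lambda_{j+1}$, so Hamiltonian perturbations alone cannot move the argument of $G_n$ freely. The paper avoids this by working directly with the full tuple $\ul z\in L_{j(j+1)}\times\ldots\times L_{(j'-1)j'}$ and imposing both $G_n=0$ and the graph conditions $z_i'=\phi^{H_i}_{\delta_i}(z_i)$ simultaneously. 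Surjectivity of the linearized operator then splits cleanly: the condition $d\tilde G_n\neq0$ (as a differential on $L_{j(j+1)}\times L_{(j'-1)j'}$) gives surjectivity onto the $\R$-component via variations $(v_j,v_{j+1},v'_{j'-1},v_{j'})$, and $h_i\mapsto D\phi^{H_i}_{\delta_i}(h_i,0)$ handles the graph components. Once you unwind your universal moduli space through the definition of $L_{jj'}^{\ul H,\ul x}$ you recover exactly this setup, so the fix is to argue on the lifted tuple rather than through $\cP_{jj'}$. Finally, the Fredholm index of the projection is $\tfrac12(\dim M_j+\dim M_{j'})-1\ge0$, not negative; as you note in your parenthetical, this is irrelevant since regularity of $\ul H$ is what ``transverse to $0$'' requires.
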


\begin{proof}
Given $\ul{q}=(q_j,q_{j+1},q_{j'-1},q_{j'})\in \cS_{jj'}$ we have splittings 
\begin{align*}
T_{(q_j,q_{j+1})}L_{j(j+1)} &=\Lambda^0_{j}\times \Lambda^0_{j+1} \;\;\, \subset \; T_{q_j} M_j \times T_{q_{j+1}} M_{j+1} , \\
T_{(q'_{j'-1},q_{j'})}L_{(j'-1)j'} &=\Lambda^0_{j'-1}\times \Lambda^0_{j'} \; \subset \; T_{q_{j'-1}} M_{j'-1} \times T_{q_{j'}} M_{j'} 
\end{align*}
into products of Lagrangian subspaces. With this we have
$\Lambda_{(j+1)(j'-1)}(\ul{q}) = \{0\} \times \Lambda^0_{j+1} \times \Lambda^0_{j'-1} \times \{0\}$
and permutation of factors provides an isomorphism
$$
T_{\ul{q}} (L_{j(j+1)}\times L_{(j'-1)j'}) \cong K_0 \times \Lambda_0
\quad\text{with}\;
K_0:=\Lambda^0_{j} \times \Lambda^0_{j'}, \;
\Lambda_0:=\Lambda^0_{j+1} \times \Lambda^0_{j'-1} .
$$ 
Now there exists an open neighbourhood $\cW_{\ul{q}} \subset M_j \times M_j+1\times M_{j'-1}\times M_j'$ of $\ul{q}$ that is symplectomorphic to an open subset of
$K^0 \times \Lambda^0 \times \bigl( K_0 \times \Lambda_0 \bigr)^*$ such that
$\cV_{\ul{q}}:=  \cW_{\ul{q}}\cap (L_{j(j+1)}\times L_{(j'-1)j'})$ corresponds to $\on{graph} d F$ for some smooth function $F:K_0 \times \Lambda_0 \to \R$, restricted to an open subset. We will identify $K_0\cong K_0^*\cong\R^k$, $k=\frac 12(\dim M_{j}+ \dim M_{j'})$ and $\Lambda_0\cong \Lambda_0^*\cong\R^\ell$, $\ell=\frac 12(\dim M_{j+1} + \dim M_{j'-1})$ and use coordinates $(\ul{x},\ul{y}) \in K_0\times\Lambda_0$.
Then the intersection $\Lambda_{(j+1)(j'-1)}$ at some point $(\ul{x},\ul{y},\nabla_{K_0}F (\ul{x},\ul{y}), \nabla_{\Lambda_0}F (\ul{x},\ul{y}))$ is spanned by those vectors
$\bigl(  0 , \ul{b} , 0 , \bigl( \sum_{i=1}^\ell b_i \tfrac{\partial^2 F}{\partial y_i \partial y_l }\bigr)_{l=1,\ldots,\ell} \bigr)$ for which
$\sum_{i=1}^\ell b_i \tfrac{\partial^2 F}{\partial y_i \partial x_\kappa}= 0$ for $\kappa=1,\ldots,k$. 
It is Lagrangian iff its rank is the maximal $\ell$.
Hence $\cS_{jj'} \cap \cV_{\ul{q}}$ is the zero set of 
$$
(G_n)_{n=1,\ldots,N} := \biggl( \frac{\partial^2 F}{\partial y_i \partial x_\kappa}
\biggr)_{i=1,\ldots,\ell, \kappa=1,\ldots,k}  : \; \cV_{\ul{q}} \; \longrightarrow \;\R^{\ell k} = \R^N .
$$
For the second part of (a) we consider a path $\gamma(t)=(\ul{x}(t), 0, \nabla_{K_0}F (\ul{x}(t),0), 0)$ given by a smooth path $\ul{x}:(-\eps,\eps)\to K_0$ such that $\frac{\partial F}{\partial y_i}(\ul{x}, 0) \equiv 0$, $\frac{\partial^2 F}{\partial y_i \partial x_\kappa} (\ul{x}, 0)\equiv 0$, and in particular $\partial_{y_l} G_{n\simeq(i,\kappa)}(\ul{x},0)=\frac{\partial^3 F}{\partial y_l \partial y_i \partial x_\kappa} (\ul{x}, 0)\equiv 0$ for all $l, i, \kappa$.
The latter guarantees that $\tfrac{\partial^2 F}{\partial y_i \partial y_l }(\ul{x}(t),0)$ and hence $\Lambda_{(j+1)(j'-1)} (\gamma(t))$ is independent of $t$.

Approaching (b), note that we may reformulate the claim as transversal intersection of $L_{j(j+1)}\times_{H_{j+1}} \ldots \times_{H_{j'-1}} L_{(j'-1)j'}$ with the zero set of 
$\tilde G_n (\ul{z}):=G_n(z_j,z_{j+1},z_{j'-1}',z_{j'})$ 
on the open set
$$
\left\{ \ul{z}=(z_j,z_{j+1},z'_{j+1},\ldots,z_{j'-1}',z_{j'}) \in L_{j(j+1)}\times \ldots \times L_{(j'-1)j'} \left|
\begin{array}{l} 
(z_j,z_{j+1},z'_{j'-1},z_{j'})\in \cV_{\ul{q}}, \\ 
d G_n (z_j,z_{j+1},z'_{j'-1},z_{j'}) \neq 0 
\end{array}
\right\}\right. .
$$
The universal moduli space of regularity $m\in\N$ for this problem is the preimage of $\{0\}\times\Delta_{M_{j+1}}\times\ldots\Delta_{M_{j'-1}}$ of the map
$$
L_{j(j+1)}\times L_{(j+1)(j+2)}\ldots \times L_{(j'-1)j'} \times 
 \bigoplus_{i=j+1}^{j'-1} \cC^m([0,\delta_i] \times M_i)
\; \longrightarrow \; \R \times \bigoplus_{i=j+1}^{j'-1} M_i\times M_i
$$
given by
$$
(z_j,\ldots,z_{j'},H_{j+1},\ldots,H_{j'-1})\longmapsto \bigl(
G_n(z_j,z_{j+1},z'_{j'-1},z_{j'}), ( \phi_{\delta_i}^{H_i}(z_i),z_i' )_{i=j+1,\ldots,j'-1} \bigr) .
$$
Hence the universal moduli space is a $\cC^m$ manifold if at every solution the operator
$$
(v_j,\ldots,v_{j'},h_{j+1},\ldots,h_{j'-1})\longmapsto \bigl(
dG_n(v_j,v_{j+1},v'_{j'-1},v_{j'}) , ( v_i' - D \phi_{\delta_i}^{H_i} (h_i,v_i)  )_{i=j+1,\ldots,j'-1} \bigr)
$$
is onto. Here surjectivity in the first component is guaranteed by the condition $d\tilde G_n(\ul{z})\neq 0$, and in the second component already $h_i\mapsto D \phi_{\delta_i}^{H_i} (h_i, 0)$ is surjective as in Theorem~\ref{H trans}. Now as before the implicit function theorem and Sard-Smale theorem, using $m> \dim M_j + \dim M_{j'} - 1$ to satisfy the index condition, provide a dense subset of $\bigoplus_{i=j+1}^{j'-1} \cC^m([0,\delta_i] \times M_i)$ for which
$\tilde G_n : \bigr(L_{j(j+1)}\times_{H_{j+1}} \ldots \times_{H_{j'-1}} L_{(j'-1)j'} \bigr) \cap \{ d\tilde G_n\neq 0 \} \to \R$ is transverse to $0$. 
Since this contains the lift of $G_{\ul{q},n}^{\ul{H},\ul{x}}: \cV^{\ul{H},\ul{x}}_{\ul{q},n} \to \R$, we find a dense open set of regular Hamiltonians of class $\cC^m$ for any given $\ul{q}\in S_{jj'}$, $1\leq n\leq N$, $\ul{x}\in\cI(\ul{L},\ul{H})$, and sufficiently large $m\in\N$.
Finally, $\cC^1$-small changes in $\ul{H}$ lead to small changes in intersection points and the linearized operators, hence we obtain open dense sets of regular values, and may take countable intersections to find a dense open subset $\cH_{jj'}(\ul{L})\subset\Ham^*(\ul{L})$ of regular smooth Hamiltonians.
\end{proof}

\section{Quilted Floer trajectories with constant components}

Given a cyclic generalized Lagrangian correspondence $\ul{L}$, widths $\ul{\delta}$, a regular tuple of Hamiltonian functions $\ul{H}\in\Ham^*(\ul{L})$, 
we now consider the Floer trajectories for some choice of almost complex structures $\ul{J}= (J_j)_{j=0,\ldots,r}\in\cJ_t(\ul{\delta})$.
For any pair $\ul{x}^-,\ul{x}^+\in\cI(\ul{L}, \ul{H})$ of generators and index $k\in\Z$, the moduli space of quilted Floer trajectories
$$
\cM^k(\ul{x}^-,\ul{x}^+;\ul{L},\ul{J}) :=
\bigl\{ \ul{u}=\bigl( u_j : \R \times [0,\delta_j] \to M_j \bigr)_{j=0,\ldots,r} \,\big|\, 
\eqref{Jjhol} , \eqref{ubc}, \eqref{ulim} , \Ind D_{\ul u} \dbar_{\ul{J}} =k \bigr\} / \R 
$$
is the space modulo simultaneous $\R$-shift of tuples of perturbed holomorphic strips
\begin{equation} \label{Jjhol}
\dbar_{J_j,H_j} u_j = \partial_s u_j + J_j \bigl(\partial_t u_j - X_{H_j}(u_j)  \bigr) = 0 
\qquad\forall j = 0,\ldots r ,
\end{equation}
satisfying the seam conditions
\begin{equation}\label{ubc}
(u_{j}({s},\delta_j),u_{j+1}({s},0)) \in L_{j(j+1)}\qquad\forall j = 0,\ldots r ,\ s\in\R
\end{equation}
as well as uniform limits 
\begin{equation} \label{ulim}
\lim_{s\to\pm\infty} u_j(s,\cdot) = x^\pm_j \quad\forall j = 0,\ldots,r  .
\end{equation}
Moreover, we fixed the index of the linearized operator -- as explained in the following.
By standard local action arguments, any such solution also has finite energy, and exponential decay analysis as in \cite{isom} shows that any solution is of Sobolev regularity $W^{1,p}$ relative to the limits for any $p>2$ in the following sense: 
We trivially extend $x_j^\pm$ to maps $\R\times[0,\delta_j]\to M_j$, then there exists $R>0$ such that $u_j(\pm s, t)$ takes values in an exponential ball around $x_j^\pm (s,t)$ for $\pm s > R$, and such that for each $j=0,\ldots,r$
\begin{equation} \label{W1p}
\bigl( (s,t) \mapsto \exp_{x_j^\pm(t)}^{-1}( u_j(\pm s, t) )  \bigr) 
\in W^{1,p}([R,\infty)\times[0,\delta_j], {x_j^\pm}^*TM_j) .
\end{equation}
With this, the moduli space can be identified with the $\R$-quotient of the zero set of a section $\dbar_{\ul{J}}:\cB\to\cE$ of a Banach bundle, where
$$
\cB :=
\bigl\{ \ul{u}=\bigl( u_j \in W^{1,p}_{\rm loc}(\R \times [0,\delta_j] , M_j ) \bigr)_{j=0,\ldots,r} \,\big|\, \eqref{ubc} , \eqref{W1p}  
 \bigr\} ,
$$
$\cE\to\cB$ is the Banach bundle with fibers 
$\cE_{\ul{u}} = \oplus_{j=0}^r L^p(\R\times[0,\delta_j], u_j^* TM_j)$,
and $\dbar_{\ul{J}}:\cB\to\cE$ is the ($\R$-invariant) Cauchy-Riemann operator
$\dbar_{\ul{J}}(\ul{u}) = \bigl( \overline\partial_{J_j,H_j} u_j \bigr)_{j=0,\ldots,r} $.
In \cite{quiltfloer} we proved that $\dbar_{\ul{J}}$ is a Fredholm section, and in the definition of the moduli space $\cM^k(\ul{x}^-,\ul{x}^+)$ we fix the Fredholm index of its linearization $D_{\ul{u}} \dbar_{\ul{J}}:T_{\ul{u}}\cB\to \cE_{\ul{u}}$.
In order to achieve transversality of the section $s$, we now restrict ourselves to a further dense open subset of Hamiltonian perturbations, as constructed in Section~\ref{Ham}.

\begin{definition}
Given a cyclic generalized Lagrangian correspondence $\ul{L}$ and widths $\ul{\delta}$, let
$$
\cH_{\reg}(\ul{L}) = \bigcap_{j,j'}  \cH_{jj'}(\ul{L}) \; \subset \; \cH_t(\ul{\delta})
$$
be the intersection over all pairs of indices $j\lhd  j'$ of the dense open subsets of regular Hamiltonians for some choices of covers of $\cS_{jj'}$ as in Theorem~\ref{intersection trans}.
\end{definition}

We now prove the main result.

\begin{theorem} \label{quilt trans} 
For any cyclic generalized Lagrangian correspondence $\ul{L}$ and any choice of widths $\ul{\delta}$ and regular Hamiltonians $\ul{H} \subset \cH_{\reg}(\ul{L})$, there exists a comeagre\footnote{
A subset of a topological space is {\it comeagre} if it is the intersection of countably many open dense subsets. Many authors in symplectic topology would use the term ``Baire second category'', which however in classical Baire theory \cite[Chapter 7.8]{royden} denotes more generally subsets that are not meagre, i.e.\ not the complement of a comeagre subset. Baire's Theorem applies to complete metric spaces such as the spaces of smooth almost complex structures considered here, and implies that every comeagre set is dense.
} 
subset $\cJ_{\reg}(\ul{L};\ul{H}) \subset \cJ_t(\ul{\delta})$ such that for all $\ul{J}\in\cJ_{\reg}(\ul{L};\ul{H})$, $\ul{x}^\pm\in\cI(\ul{L},\ul{H})$, and $k\in\Z$ the Cauchy-Riemann section $\dbar_{\ul{J}}:\cB\to\cE$ defined above is transverse to the zero section.
\end{theorem}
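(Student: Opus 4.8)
The plan is to run the standard Floer-theoretic universal moduli space argument, with the crucial input being that the ``constant component'' cases are handled by the results of Section~\ref{Ham}. First I would set up the universal moduli space $\cM^{\on{univ}}$ of pairs $(\ul{u},\ul{J})$ where $\ul{J}$ ranges over a suitable Banach manifold of $\cC^\ell$-split almost complex structures and $\ul{u}$ solves \eqref{Jjhol}, \eqref{ubc}, \eqref{ulim} with fixed index $k$; the linearization is the operator $D_{\ul u}\dbar_{\ul{J}} \oplus (\text{variation in }\ul{J})$ acting on $T_{\ul u}\cB \oplus T_{\ul{J}}\cJ^\ell$. As in \cite{quiltfloer}, once we show this linearization is surjective at every solution, the implicit function theorem makes $\cM^{\on{univ}}$ a Banach manifold, the projection to $\cJ^\ell$ is Fredholm of index $k$, Sard--Smale gives a comeagre set of regular values, and the usual approximation and $\cC^1$-openness arguments pass from $\cC^\ell$ to $\cC^\infty$ and assemble over the countably many choices of $(\ul{x}^-,\ul{x}^+,k)$.

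The content is therefore surjectivity of the universal linearization at an arbitrary solution $\ul{u}$, which by duality reduces to showing that any $\eta=(\eta_j)$ in the cokernel vanishes. Each $\eta_j$ is a $-J_j$-holomorphic section of $u_j^*TM_j$ with the linearized seam conditions along $TL_{j(j+1)}$, and the pairing $\langle \eta, \partial_{\ul{J}}\dbar(\ul{u})\rangle = 0$ forces $\eta_j \equiv 0$ on any patch where $u_j$ is non-constant, by the standard argument: at a point where $du_j \ne 0$ one can perturb $J_j$ to make the pairing nonzero unless $\eta_j$ vanishes there, and then unique continuation for the $-J_j$-holomorphic equation propagates the vanishing across all of $\R\times[0,\delta_j]$. (Here one uses that the perturbations of $J_j$ may be taken to vanish near the limit ends and near the seams where the trouble with seam values would otherwise occur; this is routine once non-constant patches are isolated.) So the only obstruction is a collection of indices $j$ for which $u_j\equiv x_j$ is constant but some neighbours are not; the analysis splits along the maximal constant ``blocks'' and we are reduced to exactly the situation set up in the Idea of Proof: on a constant block between non-constant patches $u_{j-1},u_{j+1}$ (more generally between patches indexed by $I^C$), the surviving part of $\eta$ restricted to that block satisfies a $-J$-holomorphic equation with boundary conditions in the isotropic families $\Lambda_{(j+1)(j'-1)}$.

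Now comes the main obstacle and the place where the new results are used: ruling out nontrivial $\eta$ on a constant block. Collapsing the constant patches, the tuple of non-constant components $(u_j)_{j\in I^C}$ becomes a quilted Floer trajectory for the locally composed correspondence $\ul{L}^{I,\ul{H},\ul{x}}$, whose generators are cut out transversally (Remark after Corollary~\ref{cor H trans}); transversality for that shorter problem is exactly what we inductively / by \cite{isom} have access to. I would argue in the three cases of the Idea of Proof. In case~(a) --- $\Lambda_{(j+1)(j'-1)}$ constant in $s$ --- enlarge to a constant Lagrangian, reduce $\eta$ on the block to $\ker(\partial_s + A)$ with $A$ self-adjoint and, by regularity of the generators, invertible, hence $\eta\equiv 0$. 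Otherwise, by Proposition~\ref{prop L} and Theorem~\ref{intersection trans}, the seam values $(u_{j-1}(\delta_{j-1},\cdot),u_{j+1}(0,\cdot))$ either hit the locus where $D\cP_{jj'}\not\equiv 0$ --- excluded for generic $\ul{J}$ by another universal-moduli argument in which varying $J_{j-1}$ or $J_{j+1}$ sweeps out the transversal function $G_{\ul{q},n}^{\ul{H},\ul{x}}$ of Theorem~\ref{intersection trans}(b), whose zero set has positive codimension --- or stay in $\cS_{jj'}$ with $\Lambda_{(j+1)(j'-1)}$ genuinely varying, which is the positive-codimension locus inside $\cS_{jj'}$ cut out by the $G_n$ of Theorem~\ref{intersection trans}(a), again excluded for generic almost complex structures by the same mechanism. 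Assembling these exclusions over the finitely many pairs $j\lhd j'$ and intersecting with the comeagre set from the non-constant analysis yields $\cJ_{\reg}(\ul{L};\ul{H})$, completing the proof. The delicate point throughout is the bookkeeping of which perturbations of which $J_i$ are allowed (they must be supported away from constant patches and away from seams, yet still suffice both for unique continuation on non-constant patches and for moving the boundary values off the bad loci), together with making the codimension count in Theorem~\ref{intersection trans} interact correctly with the Fredholm index of the trajectory problem so that the excluded strata are genuinely absent for a comeagre set of $\ul{J}$.
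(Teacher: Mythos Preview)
Your proposal assembles the right ingredients --- unique continuation on nonconstant patches, the $\partial_s + A$ isomorphism in case (a), exclusion of cases (b) and (c) via auxiliary universal moduli arguments, and the composed correspondence $\ul{L}^{I,\ul{H},\ul{x}}$ --- but the logical architecture has a genuine gap. You frame the core step as showing the \emph{universal} linearization is surjective at an \emph{arbitrary} solution, then applying Sard--Smale once. At solutions in cases (b), (c) you do not show this; you instead switch to ``excluded for generic $\ul{J}$''. These two moves do not combine: your cokernel argument, which uses the $\ul{J}$-variation to kill $\eta$ on nonconstant patches, establishes only that the \emph{universal} cokernel vanishes in case (a), not that the cokernel of $D_{\ul{u}}\dbar_{\ul{J}}$ for \emph{fixed} $\ul{J}$ vanishes there. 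And Sard--Smale cannot be invoked to pass from the former to the latter, because the locus of constant-component solutions is not open in $\cB$ (having $\partial_s u_i\equiv 0$ is a closed, infinite-codimension condition), so there is no Banach-manifold universal space containing the case-(a) solutions on which to run it.

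The paper never applies Sard--Smale to the full problem at constant-component solutions. It instead runs four independent universal moduli arguments: (Step~1a) on the open set $\cB_{\rm nc}$ of fully nonconstant trajectories; (Step~1b) on nonconstant trajectories for each \emph{composed} correspondence $\ul{L}^{I,\ul{H},\ul{x}^\pm}$ --- your appeal to ``inductively / by \cite{isom}'' does not cover this, since \cite{isom} concerns small widths only, and a direct argument (noting compactness of the Lagrangians is irrelevant here) is required; and (Steps~2,~3) negative-index universal moduli arguments that impose $k{+}1$ pointwise constraints at rational $s$-values along the seam, so that generic fibers are empty, thereby excluding for generic $\ul{J}$ any constant-component trajectory in case (b) or (c). Only then, for $\ul{J}$ in the intersection of these comeagre sets, is surjectivity of $D_{\ul{u}}\dbar_{\ul{J}}$ at any surviving constant-component solution verified \emph{directly}, with no further Sard--Smale: the exclusion forces the seam splitting $T_{(u_j(s,\delta_j),u_{j'}(s,0))} L_{jj'}^{\ul{H},\ul{x}^\pm} = \Lambda_j(s)\times\Lambda_{j'}(s)$, and consequently the fixed-$\ul{J}$ operator decomposes as a direct sum $D_{(u_j)_{j\in I^C}}\dbar_{\ul{J}^I}\oplus\bigoplus D_{jj'}$, the first summand surjective by Step~1b and each $D_{jj'}$ an isomorphism by the $\partial_s + A$ argument. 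It is this splitting of the fixed-$\ul{J}$ operator --- not vanishing of the universal cokernel --- that closes the proof.
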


\begin{remark} \label{rmk trans}
In fact, we prove that for generic perturbation data $\ul{H} \subset \Ham^*(\ul{L})$ and $\ul{J}\in\cJ_t^{\reg}(\ul{L};\ul{H})$ any solution $\ul{u}\in\cM^k(\ul{x}^-,\ul{x}^+;\ul{L},\ul{J})$ with some constant components has split linearized seam conditions between constant and nonconstant components in the following sense: If $\partial_s u_j\not\equiv 0$ and $u_{j+1}(s,t)=x_{j+1}(t)$, then $T_{(u_j(s,\delta_j), x_{j+1}(0))} L_{j(j+1)} = \Lambda_j(s) \times \Lambda_{j+1}$
splits into two families of Lagrangian subspaces
\begin{align*}
\Lambda_j(s) &= {\rm Pr}_{TM_j}\bigl(  T_{(u_j(s,\delta_j), x_{j+1}(0))} L_{j(j+1)} \cap 
(T_{u_j(s,\delta_j)} M_{j}\times \{0\}) \bigr) , \\
\Lambda_{j+1} &= {\rm Pr}_{TM_{j+1}} \bigl( T_{(u_j(s,\delta_j), x_{j+1}(0))} L_{j(j+1)} \cap  (\{0\}\times T_{x_{j+1}(0)} M_{j+1}) \bigr),
\end{align*}
of which the second is constant.  The analogous statement holds for $\partial_s u_j\equiv 0$ and $\partial_s u_{j+1}\not\equiv 0$.
\end{remark}

\begin{proof}
Since $\cI(\ul{L},\ul{H})$ has finitely many elements (due to the transversality in Theorem~\ref{H trans} and compactness of the Lagrangian correspondences), and countable intersections of comeagre sets are comeagre in the complete metric space $\cJ_t(\ul{\delta})$, it suffices to consider a single pair $\ul{x}_\pm\in\cI(\ul{L},\ul{H})$ and indices $k\leq k_0$ for some fixed $k_0\in\N$.

The standard universal moduli space approach, using unique continuation for each strip separately, as discussed in the proof of \cite[Thm.5.2.4.]{quiltfloer}, provides a comeagre subset in $\cJ_t(\ul{L})$ for which the section $s$ is transverse at all zeros $\ul{u}$ for which $\partial_s u_i\not\equiv 0$ for all $i=0,\ldots,r$. In addition, $s$ is automatically transverse at any completely constant solution $\ul{u}\equiv \ul{x}^+=\ul{x}^-$. It remains to consider solutions $\ul{u}$ for which a proper subset $u_i$ for $i\in I\subset\{0,\ldots,r\}$ of components is constant\footnote{We call a component $u_i$ constant if $\partial_s u_i=0$, and hence $\partial_t u_i = X_{H_i}(u_i)$, so $u_i$ is a Hamiltonian trajectory in $t$, independent of $s$.}.
A necessary condition for such solutions to exist is $x^-_i=x^+_i$ for all $i\in I$, 
and hence the locally composed cyclic Lagrangian correspondences $\ul{L}^{I,\ul{H},\ul{x}^+}=\ul{L}^{I,\ul{H},\ul{x}^-}$ are the same. Now note that any solution $\ul{u}\in\cM^k(\ul{x}^-,\ul{x}^+;\ul{L},\ul{J})$ with the $I$ components constant induces a solution $(u_i)_{i\in I^C} \in\cM^k((x^-_i)_{i\in I^C},(x^+_i)_{i\in I^C};\ul{L}^{I,\ul{H},\ul{x}^\pm},(J_i)_{i\in I^C})$ in the moduli space of same index (see \cite[3.1.8]{quiltfloer} for the index calculation) for the locally composed correspondence.
Indeed, for consecutive pairs of indices $j\lhd j' \in I^C$ we have
$\bigl( u_j(s,\delta_j), x_{j+1}(0),x_{j+1}(\delta_{j+1}), \ldots,
x_{j'-1}(\delta_{j'-1}),u_{j'}(s,0)\bigr) \in \bigl(L_{j(j+1)}
\times_{H_{j+1}} \ldots \times_{H_{j'-1}} L_{(j'-1)j'}\bigr) \cap
\tilde\cU_{\ul{x},j,j'}$.  The converse is rarely true since the lifts
from $L_{jj'}^{\ul{H},\ul{x}^\pm}$ to $L_{j(j+1)} \times_{H_{j+1}}
\ldots \times_{H_{j'-1}} L_{(j'-1)j'}$ may not be constant.  Part of
this is encoded by the lift map $\cP_{jj'}:L_{jj'}^{\ul{H},\ul{x}^\pm} \to
M_{j+1} \times M_{j'-1}$ from Definition~\ref{def P}.  In the following
six steps we substantiate the intuition that Floer trajectories with
constant components for which automatic transversality fails are in
fact nongeneric solutions.  We denote by $\cJ_t^\ell(\ul{\delta})$ the
$\cC^\ell$-closure of $\cJ_t(\ul{\delta})$ in the topology of maps
$[0,\delta_j]\times TM_j \to TM_j$.

\medskip
\noindent
{\bf Step 0:} In preparation we need to fix some choices for each pair of indices $j\lhd j'$.

Firstly, we fix a metric on each $M_i$. Then, since $L_{jj'}^{\ul{H},\ul{x}^\pm}$ is compact, we may fix an open cover $\cP_{jj'}(L_{jj'}^{\ul{H},\ul{x}^\pm})\subset\bigcup_{\ul{p}\in C_{jj'}}\cW_{\ul{p}}$ 
by a finite number (indexed by $C_{jj'}\subset M_{j+1} \times M_{j'-1}$)
of exponential balls
$\cW_{\ul{p}}\subset M_{j+1} \times M_{j'-1}$ on which
$\exp_{\ul{p}}^{-1}:\cW_{\ul{p}}\to B_{\eps_{\ul{p}}}(0)\subset
T_{\ul{p}} (M_{j+1} \times M_{j'-1})$ is a
diffeomorphism.  We also fix a collection of $1$-dimensional subspaces
$(H_m)_{m=1,\ldots,\dim M_{j+1} + \dim M_{j'-1}}$ spanning $T_{\ul{p}}
(M_{j+1} \times M_{j'-1})$ for each $\ul{p}\in C_{jj'}$.

Secondly, as in Theorem~\ref{intersection trans} we fix a finite open cover $\cS_{jj'}\subset\bigcup_{\ul{q}\in S_{jj'}} \cV_{\ul{q}}$ and choose
$\ul{H}$ such that  the submanifold
$\bigl\{ \ul{z}\in \cV_{\ul{q}} \,\big|\, G_n(\ul{z})=0 ,  d G_n (\ul{z}) \neq 0 \bigr\}\subset L_{j(j+1)}\times L_{(j'-1)j'}$ is transverse to $\tilde L_{jj'}^{\ul{H},\ul{x}^\pm}$
for every $\ul{q}\in S_{jj'}$ and  $n=1,\ldots, N$.

\bigskip
\noindent
{\bf Step 1a :} 
We start by reviewing the regularity of the linearized operator at solutions without constant components, more precisely we prove the following:

{\it For every integer $\ell>k_0$ there exists a comeagre subset $\cJ_1^{\ell} \subset \cJ_t^\ell(\ul{\delta})$ such that for any $\ul{J}\in \cJ_1^\ell$ the linearized operator $D_{\ul{u}}\dbar_{\ul{J}}$ is surjective for all $\ul{u}\in\bigcup_{k\leq k_0}\cM^k(\ul{x}^-,\ul{x}^+;\ul{L},\ul{J})$ with no constant components.}

\medskip

This is what the arguments of \cite{quiltfloer} actually prove. To be precise, we consider the universal moduli problem $\cJ_t^\ell(\ul{\delta})\times\cB_{\rm nc}\to \cE|_{\cB_{\rm nc}}$, 
$(\ul{J},\ul{u})\mapsto \dbar_{\ul{J}}\ul{u}$ on the open subset 
$$
\cB_{\rm nc}:= \bigl\{ \ul{u}\in\cB \,\big|\, \partial_s u_i \neq 0 \;\forall i=0,\ldots,r \bigr\} \subset\cB.
$$
This is a $\cC^\ell$ section of a Banach bundle whose linearized operator at a zero $\dbar_{\ul{J}}\ul{u}=0$ is
$\bigl( \ul{K}=(K_i)_{i=0,\ldots,r}, \ul{\xi} \bigr) \mapsto (D_{\ul{u}}\dbar_{\ul{J}}) \ul{\xi} - \bigl(K_i J_i  \partial_s u_i \bigr)_{i=0,\ldots,r}$.
Here the second summand is already surjective by the same arguments as in \cite{fhs}. Indeed, 
the unique continuation theorem applies to the interior of every single nonconstant strip $u_i:\R\times(0,\delta_i)\to M_i$ and implies that the set of regular points, $(s_0,t_0)\in\R\times(0,\delta_i)$ with
$\partial_s u_i(s_0,t_0)\neq 0$ and $u_i^{-1}(u_i(\R\cup\{\pm\infty\}) , t_0
) =\{(s_0,t_0)\}$, is open and dense. 
So by the implicit function theorem $\{(\ul{J},\ul{u})\,|\, \dbar_{\ul{J}}\ul{u}=0\}$ is a $\cC^\ell$ Banach manifold. Its projection to $\cJ_t^\ell(\ul{\delta})$ is a Fredholm map of class $\cC^\ell$ and index $\Ind D_{\ul{u}} \leq k_0 \leq \ell -1$.
Hence, by the Sard-Smale theorem, the set of regular values, which coincides with the set $\ul{J}\in\cJ_t^\ell(\ul{\delta})$ such that $D_{\ul{u}}\dbar_{\ul{J}}$ is surjective for all solutions $\ul{u}$, is comeagre as claimed.

\bigskip
\noindent
{\bf Step 1b :} 
Next, a similar argument provides the following regularity of the linearized operator for the locally composed cyclic generalized Lagrangian correspondences:

{\it For every proper subset $I\subset\{0,\ldots,r\}$ such that
$(x^-_i)_{i\in I}=(x^+_i)_{i\in I}$ and every integer $\ell>k_0$ there exists a comeagre subset $\cJ_{1,I}^\ell \subset \cJ_t^\ell(\ul{\delta})$ such that for any $\ul{J}\in \cJ_{1,I}^\ell$ the linearized operator $D_{\ul{v}}\dbar_{\ul{J}^I}$ is surjective for all $\ul{v}\in\bigcup_{k\leq k_0}\cM^k((x^-_j)_{j\in I^C},(x^+_j)_{j\in I^C};\ul{L}^{I,\ul{H},\ul{x}^\pm},\ul{J}^I)$ with no constant components.}

\medskip

The locally composed cyclic Lagrangian correspondence $\ul{L}^{I,\ul{H},\ul{x}^\pm}$ consists of smooth, yet not compact, Lagrangian submanifolds. However, the compactness is not relevant for the universal moduli space arguments. Hence as in Step 1a we find a comeagre subset of the $\cC^\ell$-closure of $\oplus_{j\in I^C} \cC^\infty([0,\delta_j],\cJ(M_j,\omega_j))$ with the transversality properties. Then we let $\cJ_{1,I}^\ell$ be the preimage under the projection $\ul{J}\to \ul{J}^I=(J_j)_{j\in I^C}$.

\bigskip
\noindent
{\bf Step 2 :} 
In this first nonstandard step we show that for quilted Floer trajectories w.r.t.\ generic almost complex structures the differential $D\cP_{jj'}$ of the lift map from Definition~\ref{def P} vanishes along the seams bounding constant components. More precisely:

{\it For every integer $\ell>k_0$ and pair of indices $j\lhd j'$ 
such that $x^-_i=x^+_i$ for $i=j+1,\ldots,j'-1$
there exists a comeagre subset $\cJ_{2,j,j'}^\ell \subset \cJ_t^\ell(\ul{\delta})$ such that for any $\ul{J}\in \cJ_{2,j,j'}^\ell$ and $\ul{u}\in\bigcup_{k\leq k_0}\cM^k(\ul{x}^-,\ul{x}^+;\ul{L},\ul{J})$ with $u_{j+1},\ldots,u_{j'-1}$ constant we have 
${D_{(u_j(s,\delta_j), u_{j'}(s,0) )}\cP_{jj'} = 0}$ for all $s\in\R$.}

\medskip

Given $\ell,j,j'$ we set $I:=\{j+1,\ldots j'-1\}$ and start by proving
an intermediate Lemma, which asserts emptyness of the moduli spaces of quilted Floer trajectories for $\ul{L}^{I,\ul{H},\ul{x}^\pm}$ with $D\cP_{jj'}\not\equiv 0$ but a weak form of constant lifts $\cP_{jj'}$ at sufficiently many points along the seam.

\medskip
\noindent
{\bf Lemma for Step 2 :}  
{\it For every $k\leq k_0$, $\ul{p}\in C_{jj'}$, subspace 
$H_m\subset T_{\ul{p}}(M_{j+1} \times M_{j'-1})$, and tuple of rationals 
$s_0<\ldots<s_{k+1}\in\Q$ there exists a comeagre subset 
$\cJ(\ul{p},m,s_0,\ldots,s_{k+1}) \subset \cJ_t^\ell(\ul{\delta})$ such that for any
$\ul{J}\in \cJ(\ul{p},m,s_0,\ldots,s_{k+1})$ there exists no Floer trajectory 
$\ul{v}\in\cM^k((x^-_i)_{i\in I^C},(x^+_i)_{i\in I^C};\ul{L}^{I,\ul{H},\ul{x}^\pm},\ul{J}^I)$ which satisfies  
$\cP_{jj'}  (v_j(s_l,\delta_j), v_{j'}(s_l,0) ) \in \cW_{\ul{p}}$ and 
${\rm Pr}_{H_m}\circ D\exp_{\ul{p}}^{-1} \circ \, D_{(v_j(s_l,\delta_j), v_{j'}(s_l,0) )}\cP_{jj'}  \neq 0 $ 
for all $l=0,\ldots,k+1$, and moreover for $l=1,\ldots,{k+1}$
$$
{\rm Pr}_{H_m} \exp_{\ul{p}}^{-1}(\cP_{jj'}(v_j(s_l,\delta_j), v_{j'}(s_l,0))) =
{\rm Pr}_{H_m} \exp_{\ul{p}}^{-1}(\cP_{jj'}(v_j(s_0,\delta_j), v_{j'}(s_0,0))) .
$$
}

\medskip

The proof is by a universal moduli space argument.  
Let $\cB$ be the Banach manifold as in the definition of
$\cM^k((x^-_i)_{i\in I^C},(x^+_i)_{i\in I^C};\ul{L}^{I,\ul{H},\ul{x}^\pm},\ul{J}^I)$. Then
$$
\cB':=\left\{ \ul{v}\in\cB \,\left|\, 
\begin{array}{l}
\cP_{jj'}  (v_j(s_l,\delta_j), v_{j'}(s_l,0) ) \in \cW_{\ul{p}} \quad \forall\; 0\leq l\leq k+1 ,  \\
{\rm Pr}_{H_m}\circ D\exp_{\ul{p}}^{-1} \circ \, D_{(v_j(s_l,\delta_j), v_{j'}(s_l,0) )}\cP_{jj'}  \neq 0 \quad \forall\; 0\leq l\leq k+1 
 \end{array}
\right\} \right.
$$
is an open subset of $\cB$, and 
$$
s(\ul{J},\ul{v}):=\left( \; \dbar_{\ul{J}^I}(\ul{v}) \;,\; 
\left(  \begin{array}{l}     
{\rm Pr}_{H_m} \bigl( \exp_{\ul{p}}^{-1}(\cP_{jj'}(v_j(s_l,\delta_j), v_{j'}(s_l,0))) \bigr) \\
- {\rm Pr}_{H_m} \bigl( \exp_{\ul{p}}^{-1}(\cP_{jj'}(v_j(s_0,\delta_j), v_{j'}(s_0,0))) \bigr)
\end{array}
\right)_{l=1,\ldots,k+1} \right)
$$
defines a $\cC^\ell$ section of the bundle $\cE|_{\cB'}\times ( H_m )^{k+1} \to \cJ_t^\ell(\ul{\delta})\times\cB'$.
Its linearized operator at a zero maps $\bigl( \ul{K}=(K_i)_{i=0,\ldots,r}, \ul{\xi}=(\xi_i)_{i\in I^C} \bigr)$ to
$$
\left( \begin{array}{c} 
\bigl(D_{\ul{v}}\dbar_{\ul{J}^I}\bigr) \ul{\xi} - \bigl(K_i J_i  \partial_s v_i \bigr)_{i\in I^C} 
\phantom{\bigg|} \\
\left(  \begin{array}{l}   
\bigl( {\rm Pr}_{H_m} \circ D\exp_{\ul{p}}^{-1} \circ  D_{(v_j(s_l,\delta_j), v_{j'}(s_l,0)) } \cP_{jj'}  \bigr)
\bigl(\xi_j(s_l,\delta_j), \xi_{j'}(s_l,0) \bigr)  \\
- \bigl( {\rm Pr}_{H_m} \circ D\exp_{\ul{p}}^{-1} \circ  D_{(v_j(s_0,\delta_j), v_{j'}(s_0,0)) } \cP_{jj'}  \bigr)
\bigl(\xi_j(s_0,\delta_j), \xi_{j'}(s_0,0) \bigr) 
\end{array}
\right)_{l=1,\ldots,k+1}
\end{array}
\right).
$$
Here the second summand in the first component is surjective by the same arguments as in Step 1a, using just the freedom in $\ul{K}$. The second component is surjective since by definition of $\cB'$ each map
$$ 
{\rm Pr}_{H_m} \circ D \exp_{\ul{p}}^{-1} \circ D_{(v_j(s_l,\delta_j), v_{j'}(s_l,0)) } \cP_{jj'} : T_{(v_j(s_l,\delta_j), v_{j'}(s_l,0)) } L^{\ul{H},\ul{x}^\pm}_{jj'} \to H_m
$$ 
is nonzero, i.e.\ surjective onto this one dimensional subspace, and
$\ul{\xi}$ can be chosen to assume any given tuple of values on the
linearized Lagrangian correspondence $TL^{\ul{H},\ul{x}^\pm}_{jj'}$ at
distinct $s_1,\ldots, s_{k+1} \in \R$.  So by the implicit function
theorem $\{(\ul{J},\ul{v})\,|\, s(\ul{J},\ul{v})=0\}$ is a $\cC^\ell$
Banach manifold. Its projection to $\cJ_t^\ell(\ul{\delta})$ is a
Fredholm map of class $\cC^\ell$ and index $\Ind D_{\ul{v}} - (k+1) =
-1$.  Hence, by the Sard-Smale theorem, the set of regular values is
comeagre.  Finally, since the index is negative, the set of solutions
for a regular $\ul{J}$ is empty, which proves the Lemma.

\medskip

We now obtain a comeagre subset $\cJ_{2,j,j'}^\ell \subset \cJ_t^\ell(\ul{\delta})$ by taking the countable intersection of the comeagre sets $\cJ(\ul{p},m,s_0,\ldots,s_{k+1})$ given by the Lemma.
Then suppose by contradiction that for some
$\ul{J}\in \cJ_{2,j,j'}^\ell$ we have a solution
$\ul{u}\in\cM^k(\ul{x}^-,\ul{x}^+;\ul{L},\ul{J})$ for some $k\leq k_0$
with $u_{j+1},\ldots,u_{j'-1}$ constant but $D_{(u_j(s_0,\delta_j),
  u_{j'}(s_0,0) )}\cP_{jj'} \neq 0$ for some $s_0\in\R$.  
As discussed at the beginning of the proof of this Theorem, $\ul{u}$ induces a solution $\ul{v}:= (u_i)_{i\in I^C}\in \cM^k((x^-_i)_{i\in I^C},(x^+_i)_{i\in I^C};\ul{L}^{I,\ul{H},\ul{x}^\pm},\ul{J}^I)$ such that $\cP_{jj'}(v_j(s,\delta_j),v_{j'}(s,0))=(u_{j+1}(s,0),u_{j'-1}(s,\delta_{j'-1}))$ is constant in $s\in\R$.  Moreover, we have $D_{(v_j(s_0,\delta_j), v_{j'}(s_0,0))}\cP_{jj'} \neq 0$ for some $s_0\in\R$.  Since this is an open condition, we may also find $s_0\in\Q$ with the same nonvanishing.
Then we have 
$\cP_{jj'}(v_j(s_0,\delta_j), v_{j'}(s_0,0) )\in \cW_{\ul{p}}$ for some $\ul{p}\in C_{jj'}$ and ${\rm Pr}_{H_m}\circ D\exp_{\ul{p}}^{-1} \circ \, D_{(v_j(s_0,\delta_j), v_{j'}(s_0,0) )}\cP_{jj'} \neq 0$ for one
of the spanning subspaces $H_m\subset T_{\ul{p}}(M_{j+1} \times M_{j'-1})$.  Again, these are open conditions, so we may find rational numbers $s_0<s_1<\ldots<s_{k+1}$ with the same properties. This contradicts the Lemma since 
$\cP_{jj'}(v_j(s_l,\delta_j),v_{j'}(s_l,0))
=(x_{j+1}(0),x_{j'-1}(\delta_{j'-1}))
=\cP_{jj'}(v_j(s_0,\delta_j), v_{j'}(s_0,0))$ 
for $l=1,\ldots,{k+1}$.

\bigskip
\noindent
{\bf Step 3 :} 
Extending Step 2, we show that for quilted Floer trajectories w.r.t.\ generic almost complex structures in fact the splitting condition of Proposition~\ref{prop L} on the linearized seam conditions holds along the seams bounding constant components. More precisely:

{\it For every integer $\ell>k_0$ and pair of indices
  $j\lhd j'$ such that ${x^-_i=x^+_i=:x_i}$ for $i=j+1,\ldots,j'-1$
  there exists a comeagre subset $\cJ_{3,j,j'}^\ell \subset
  \cJ_t^\ell(\ul{\delta})$ such that for any $\ul{J}\in
  \cJ_{3,j,j'}^\ell$ and $\ul{u}\in\bigcup_{k\leq
    k_0}\cM^k(\ul{x}^-,\ul{x}^+;\ul{L},\ul{J})$ with
  $u_{j+1},\ldots,u_{j'-1}$ constant the intersection
$$
T_{(u_j(s,\delta_j), x_{j+1}(0),x_{j'-1}(\delta_{j'-1}),u_{j'}(s,0))}
(L_{j(j+1)}\times L_{(j'-1)j'}) \cap ( \{0\}\times T_{x_{j+1}(0)} M_{j+1}\times T_{x_{j'-1}(\delta_{j'-1})}M_{j'-1}\times\{0\} \bigr)
$$
projects to a Lagrangian subspace $T_{x_{j+1}(0)} M_{j+1}\times T_{x_{j'-1}(\delta_{j'-1})} M_{j'-1}$ that is independent of $s\in\R$.}

\medskip

Given $\ell,j,j'$ we set $I:=\{0,\ldots,j,j',\ldots,r\}$ and start by
proving an intermediate Lemma which asserts emptyness of the moduli spaces of quilted Floer trajectories for $\ul{L}^{I,\ul{H},\ul{x}^\pm}$ with $dG_{n}\not\equiv 0$ but $G_n=0$ at sufficiently many points along the seam.
This will be relevant since by Theorem~\ref{intersection trans} the split locus is locally given by the intersection of the zero sets $G_n^{-1}(0)$, and since $d G_n \equiv 0$ along a path in the split locus ensures $s$-independence of the Lagrangian subspace of $TM_{j+1}\times TM_{j'-1}$ that arises from the splitting.

\medskip
\noindent
{\bf Lemma for Step 3 :}  {\it For every $k\leq k_0$,  $\ul{q}\in S_{jj'}$, $1\leq n\leq N$, and tuple of rationals $s_0<\ldots<s_k\in\Q$ there exists a comeagre subset $\cJ(\ul{q},n,s_0,\ldots,s_{k}) \subset \cJ_t^\ell(\ul{\delta})$ such that for $\ul{J}\in \cJ(\ul{q},n,s_0,\ldots,s_{k})$ there exists no solution $\ul{v}\in\cM^k((x^-_i)_{i\in I^C},(x^+_i)_{i\in I^C};\ul{L}^{I,\ul{H},\ul{x}^\pm},\ul{J}^I)$ with ${v}_{jj'}(s_l):=(v_j(s_l,\delta_j),\cP_{jj'}(v_j(s_l,\delta_j), v_{j'}(s_l,0) ),v_{j'}(s_l,0))\in \cV_{\ul{q}}$,  $G_n({v}_{jj'}(s_l))=0$, and $d G_n({v}_{jj'}(s_l)) \neq 0$ for $0\leq l \leq k$.
}

\medskip

Let $\cB$ be the Banach manifold as in the definition of 
$\cM^k((x^-_i)_{i\in I^C},(x^+_i)_{i \in I^C};\ul{L}^{I,\ul{H},\ul{x}^\pm},\ul{J}^I)$ and recall from Theorem~\ref{intersection trans}~(b) the transversality of the function
$G_{\ul{q},n}^{\ul{H},\ul{x}^\pm}: \cV^{\ul{H},\ul{x}^\pm}_{\ul{q},n} \to \R$, $(z_j,z_{j'})\mapsto G_n (z_j,\cP_{jj'}(z_j,z_{j'}),z_{j'})$ on the open set
$$
\cV^{\ul{H},\ul{x}^\pm}_{\ul{q},n}:=\bigl\{ (z_j,z_{j'})\in L_{jj'}^{\ul{H},\ul{x}^\pm} \,\big|\, (z_j,\cP_{jj'}(z_j,z_{j'}),z_{j'})\in \cV_{\ul{q}}, \; d G_n (z_j,\cP_{jj'}(z_j,z_{j'}),z_{j'}) \neq 0 \bigr\} .
$$
Then 
$\cB'':=\bigl\{ \ul{v}\in\cB \,\big|\, 
\bigl( v_j(s_l,\delta_j), v_{j'}(s_l,0) \bigr) \in \cV^{\ul{H},\ul{x}^\pm}_{\ul{q},n} \;\; \forall\; 0\leq l\leq k  \bigr\}$
is an open subset of $\cB$ and 
$$
s(\ul{J},\ul{v}):=\Bigl( \; \dbar_{\ul{J}^I}(\ul{v}) \;,\;  
\bigl( G_{\ul{q},n}^{\ul{H},\ul{x}^\pm} \bigl( v_j(s_l,\delta_j), v_{j'}(s_l,0) \bigr)_{l=0,\ldots,k} \Bigr)
$$
defines a $\cC^\ell$ section of the bundle $\cE|_{\cB''}\times \R^{k+1} \to \cJ_t^\ell(\ul{\delta})\times\cB''$.
Its linearized operator at a zero maps $\bigl( \ul{K}=(K_i)_{i=0,\ldots,r}, \ul{\xi}=(\xi_i)_{i\in I^C} \bigr)$ to
$$
\Bigl( \; \bigl(D_{\ul{v}}\dbar_{\ul{J}^I}\bigr) \ul{\xi} - \bigl(K_i J_i  \partial_s v_i \bigr)_{i\in I^C} \;,\;
\bigl( D_{( v_j(s_l,\delta_j),v_{j'}(s_l,0))}  G_{\ul{q},n}^{\ul{H},\ul{x}^\pm}  \bigl( \xi_j(s_l,\delta_j),\xi_{j'}(s_l,0) \bigr)_{l=0,\ldots,k} \Bigl).
$$
As before, the second summand in the first component is surjective using just the freedom in $\ul{K}$. The second component is surjective since by Theorem~\ref{intersection trans}~(b) each map $D_{( v_j(s_l,\delta_j),v_{j'}(s_l,0))}  G_{\ul{q},n}^{\ul{H},\ul{x}^\pm}$ is surjective, and $\ul{\xi}$ can be chosen to assume any given tuple of values on the linearized Lagrangian correspondence $TL^{\ul{H},\ul{x}^\pm}_{jj'}$ at distinct $s_0,\ldots, s_{k+1} \in \R$.
So by the implicit function theorem $\{(\ul{J},\ul{v})\,|\, s(\ul{J},\ul{v})=0\}$ is a $\cC^\ell$ Banach manifold and its projection to $\cJ_t^\ell(\ul{\delta})$ is a Fredholm map of class $\cC^\ell$ and negative index $\Ind D_{\ul{v}} - (k+1) = -1$. As before, by the Sard-Smale theorem, the set of regular values is comeagre, and for each regular $\ul{J}$ the set of solutions is empty. This proves the Lemma.

\medskip

We now obtain a comeagre subset $\cJ_{3,j,j'}^\ell \subset \cJ_t^\ell(\ul{\delta})$ by taking the countable intersection of the comeagre sets $\cJ(\ul{q},n,s_0,\ldots,s_{k})$ given by the Lemma with $\cJ_{2,j,j'}^\ell$.
Now consider any $\ul{J}\in \cJ_{3,j,j'}^\ell$ and $\ul{u}\in\bigcup_{k\leq k_0}\cM^k(\ul{x}^-,\ul{x}^+;\ul{L},\ul{J})$ with $u_{j+1},\ldots,u_{j'-1}$ constant.
As before, this induces a solution $\ul{v}:= (u_i)_{i\in I^C}\in \cM^k((x^-_i)_{i\in I^C},(x^+_i)_{i\in I^C};\ul{L}^{I,\ul{H},\ul{x}^\pm},\ul{J}^I)$ such that $\cP_{jj'}(v_j(s,\delta_j), v_{j'}(s,0))=(u_{j+1}(s,0),u_{j'-1}(s,\delta_{j'-1}))=(x_{j+1}(0),x_{j'-1}(\delta_{j'-1}))$ is independent of $s\in\R$.
Moreover, we know from Step 2 that $D_{(v_j(s_0,\delta_j), v_{j'}(s_0,0) )}\cP_{jj'}= 0$ for all $s\in\R$, and hence by Proposition~\ref{prop L} the intersection
$\Lambda(v_{jj'}(s))$ at $v_{jj'}(s):=(v_j(s,\delta_j), x_{j+1}(0),x_{j'-1}(\delta_{j'-1}),v_{j'}(s,0) )$
induces a Lagrangian subspace of $T_{x_{j+1}(0)} M_{j+1}\times T_{x_{j'-1}(\delta_{j'-1})} M_{j'-1}$
for every $s\in\R$.
Suppose by contradiction that it is not constant on any neighbourhood of $\sigma\in\R$.
Fix $\ul{q}\in S_{jj'}$ such that $v_{jj'}(s)\in\cV_{\ul{q}}$ for $|s-\sigma|<\eps$ sufficiently small, then by Theorem~\ref{intersection trans}~(a) we have $G_1(v_{jj'}(s)) =\ldots = G_N(v_{jj'}(s)) =0$ for all $|s-\sigma|<\eps$, but $dG_{n}(v_{jj'}(\sigma'))\neq 0$ for some $1\leq n\leq N$ and $\sigma'\in (\sigma-\eps,\sigma+\eps)$.
Since the nonvanishing is an open condition, we may also find $s_0<\ldots<s_k\in\Q\cap (\sigma-\eps,\sigma+\eps)$ with $dG_{n}(v_{jj'}(s_l))\neq 0$, in contradiction to 
the Lemma.

\bigskip
\noindent
{\bf Step 4 :} 
Next we explicitly state Step 3 as a splitting property and deduce surjectivity of part of the linearized operator:

{\it If $\ul{u}\in\cM^k(\ul{x}^-,\ul{x}^+;\ul{L},\ul{J})$ with $u_i(s,t)=x_i^\pm(t)=:x_i(t)$ for $i=j+1,\ldots,j'-1$ gives rise to a constant Lagrangian subspace as in Step 3, then the linearized seam conditions
$$
T_{(u_j(s,\delta_j), u_{j+1}(s,0))} L_{j(j+1)} = \Lambda_j(s) \times \Lambda_{j+1} , \qquad
T_{(u_{j'-1}(s,\delta_{j'-1}),u_{j'}(s,0))} L_{(j'-1)j'} = \Lambda_{j'-1} \times \Lambda_{j'}(s)
$$
split into the Lagrangian subspaces
\begin{align*}
\Lambda_j(s) \; &\cong \;\qquad T_{(u_j(s,\delta_j), x_{j+1}(0))} L_{j(j+1)} \cap \bigl(T_{u_j(s,\delta_j)} M_{j}\times \{0\}\bigr) \qquad\quad\, \hookrightarrow \; T_{u_j(s,\delta_j)} M_{j} \\
\Lambda_{j+1} \;&\cong \;\qquad T_{(u_j(s,\delta_j), x_{j+1}(0))} L_{j(j+1)} \cap  \bigl( \{0\}\times T_{x_{j+1}(0)} M_{j+1} \bigr)  \qquad\; \hookrightarrow \; T_{x_{j+1}(0)} M_{j+1}\\
\Lambda_{j'-1}\;&\cong\; T_{(x_{j'-1}(\delta_{j'-1}), u_{j'}(s,0))} L_{(j'-1)j'} \cap \bigl( T_{x_{j'-1}(\delta_{j'-1})} M_{j'-1}\times \{0\} \bigr)  \;\hookrightarrow \; T_{x_{j'}(\delta_{j'})} M_{j'}\\
\Lambda_{j'}(s) \;&\cong \;T_{(x_{j'-1}(\delta_{j'-1}), u_{j'}(s,0))} L_{(j'-1)j'} \cap \bigl( \{0\}\times T_{u_{j'}(s,0)} M_{j'} \bigr) \qquad\;\;\; \hookrightarrow \; T_{u_{j'}(s,0)} M_{j'} .
\end{align*}
Moreover,  the operator
$D_{jj'}:=\bigl( \partial_s + J_i(x_i) \partial_t  - J_i(x_i) D_{x_i}X_{H_i}  \bigr)_{i=j+1,\ldots, j'-1}$
maps
$$
\left\{
\ul{\xi}\in \oplus_{i=j+1}^{j'-1} W^{1,p}(\R\times[0,\delta_i], x_i^*TM_i) \left| 
\begin{array}{l}
\bigl( \xi_i(s,\delta_i) , \xi_{i+1}(s,0) \bigr) \in T_{(x_i(\delta_i),x_{i+1}(0))} L_{i(i+1)} \quad\forall i
\\
\quad \xi_{j+1}(s,0) \in \Lambda_{j+1}, \qquad
\xi_{j'-1}(s,\delta_{j'-1}) \in \Lambda_{j'-1}
\end{array}
\right\}\right. 
$$
onto $\oplus_{i=j+1}^{j'-1} L^p(\R\times[0,\delta_i], x_i^*TM_i)$.
}

\medskip

We can express the operator $D_{jj'} = \partial_s + A$ in terms of an $s$-independent operator $A= ( J_i(x_i) \partial_t - J_i(x_i) D_{x_i}X_{H_i} )_{i=j+1,\ldots,j'-1}$, which is self-adjoint on $\oplus_{i=j+1}^{j'-1} L^2([0,\delta_i], x_i^*TM_i)$ with domain 
$$
\left\{
\ul{\zeta}\in \oplus_{i=j+1}^{j'-1} W^{1,2}([0,\delta_i], x_i^*TM_i) \left| 
\begin{array}{l}
\bigl( \zeta_i(\delta_i) , \zeta_{i+1}(0) \bigr) \in T_{(x_i(\delta_i),x_{i+1}(0))} L_{i(i+1)} \quad\forall i
\\
\quad \zeta_{j+1}(0) \in \Lambda_{j+1}, \qquad
\zeta_{j'-1}(\delta_{j'-1}) \in \Lambda_{j'-1}
\end{array}
\right\}\right. .
$$
Moreover, the nondegeneracy of the intersection points $\cI(\ul{L},\ul{H})$ implies that $A$ is invertible. Indeed, the linearized operator cutting out $\cI(\ul{L},\ul{H})$ as trajectory space splits at $\ul{x}^\pm$ into $A$ and $( J_i(x^\pm_i) \partial_t - J_i(x^\pm_i) D_{x^\pm_i}X_{H_i} )_{i\in\{j+1,\ldots,j'-1\}^C}$ with the analogous linearized seam conditions.
Now a general spectral analysis and Sobolev embedding argument proves that $D_{jj'}$ is in fact an isomorphism, see e.g.\ \cite[Chapter 3]{donaldson:floer}.

\bigskip
\noindent
{\bf Step 5 :}
We deduce from the previous steps that the set of almost complex structures of class $\cC^\ell$, for which the linearized operators are surjective, is dense in the following sense:

{\it For every integer $\ell>k_0$ let $\cJ_{\rm reg}^\ell$ be the set of $\ul{J}\in \cJ_t^\ell(\ul{\delta})$ for which the linearized operators $D_{\ul{u}}\dbar_{\ul{J}}$ are surjective at all $\ul{u}\in\bigcup_{k\leq k_0}\cM^k(\ul{x}^-,\ul{x}^+;\ul{L},\ul{J})$. Then $\cJ_{\rm reg}^{\ell}\subset \cJ_t^\ell(\ul{\delta})$ is dense. }

\medskip

The density will follow from proving that $\cJ_{\rm reg}^{\ell}$ contains the intersection of $\cJ^\ell_1$, all $\cJ^\ell_{1,I}$, and all $\cJ^\ell_{3,j,j'}$, i.e.\ a comeagre and hence dense set. So we need to consider a given $\ul{J}\in \cJ^\ell_1\cap \bigcap_{I} \cJ^\ell_{1,I}\cap \bigcap_{j,j'}\cJ^\ell_{3,j,j'}$ and show surjectivity of $D_{\ul{u}}\dbar_{\ul{J}}$ for all solutions $\ul{u}$.

Step~1a ensures surjectivity if $\ul{u}$ has no constant components, so
it remains to consider $\ul{u}\in\bigcup_{k\leq
  k_0}\cM^k(\ul{x}^-,\ul{x}^+;\ul{L},\ul{J})$ with $\partial_s u_i
\equiv 0 \Leftrightarrow i\in I$ for some subset
$I\subset\{0,\ldots,r\}$.  If all components are constant, then
surjectivity follows as in Step 4 from the fact that
$D_{\ul{u}}\dbar_{\ul{J}} = \partial_s + A$ is given by the
$s$-independent self-adjoint operator $A= ( J_i(x_i) \partial_t -
J_i(x_i) D_{x_i}X_{H_i} )_{i=0,\ldots,r}$ with constant Lagrangian
seam conditions.

If $\ul{u}$ is a solution with constant components for a proper subset $I\subset\{0,\ldots,r\}$, then as before this induces a solution $(u_j)_{j\in I^C}\in \cM^k((x^-_j)_{j\in I^C},(x^+_j)_{j\in I^C};\ul{L}^{I,\ul{H},\ul{x}^\pm},\ul{J}^I)$ for the locally composed cyclic Lagrangian correspondence $\ul{L}^{I,\ul{H},\ul{x}^\pm}$ consisting of $L_{j(j+1)}$ for $j,j+1\in I^C$ and $L_{jj'}^{\ul{H},\ul{x}^\pm}\subset M_j\times M_{j'}$ for each pair of consecutive indices $j\lhd j'\in I^C$, i.e.\ with $j+1,\ldots,j'-1\in I$. Moreover, Step 3 implies that the linearized seam conditions at each consecutive $j\lhd j'\in I^C$ split as in Step 4. As a direct consequence, the locally composed correspondence also splits,
$$
T_{(u_j(s,\delta_j),u_{j'}(s,0)) } L_{jj'}^{\ul{H},\ul{x}^\pm} = \Lambda_j(s) \times \Lambda_{j'}(s) .
$$
That is, the seam conditions in the linearized operator $D_{(u_j)_{j\in I^C}}\dbar_{\ul{J}^I}$ for the moduli space associated to the local composition coincide with the seam conditions in nonconstant components of the linearized operator $D_{\ul{u}}\dbar_{\ul{J}}$. Hence the linearized operator for the full problem $D_{\ul{u}}\dbar_{\ul{J}}$ is the direct sum of $D_{(u_j)_{j\in I^C}}\dbar_{\ul{J}^I}$ and the operators $D_{jj'}$ as in Step 4 for each consecutive $j\lhd j'\in I^C$. The latter are surjective by Step 4, whereas the former is surjective by Step 1b.
This shows that $D_{\ul{u}}\dbar_{\ul{J}}$ is indeed surjective for all solutions $\ul{u}$ of index up to $k_0$, and hence $\ul{J}\in\cJ_{\rm reg}^{\ell}$.

\bigskip
\noindent
{\bf Step 6:} 
As final step we use an intersection argument due to Taubes to transfer from 
$\cC^\ell$ 
to 
$\cC^\infty$ almost complex structures. For fixed $\ul{x}^\pm\in\cI(\ul{L},\ul{H})$, $k_0\in\N_0$ this proves the following:

 {\it  Let 
 $\cJ_{\reg, k_0}(\ul{x}^-,\ul{x}^+)$ 
 be the set of $\ul{J} \in \cJ_t(\ul{\delta})$ for which the linearized operators $D_{\ul{u}}\dbar_{\ul{J}}$ are surjective for all $\ul{u}\in\bigcup_{k\leq k_0}\cM^k(\ul{x}^-,\ul{x}^+;\ul{L},\ul{J})$.
Then $\cJ_{\reg, k_0}(\ul{x}^-,\ul{x}^+) \subset \cJ_t(\ul{\delta})$ is comeagre.}

\medskip

For every $R\geq 0$ let $\cJ_{\rm reg}^{R}\subset \cJ_t(\ul{\delta})$ and $\cJ_{\rm reg}^{\ell,R}\subset \cJ_t^\ell(\ul{\delta})$ for $\ell>k_0$ be the sets of $\ul{J}$ for which the linearized operators $D_{\ul{u}}\dbar_{\ul{J}}$ are surjective at all $\ul{u}\in\bigcup_{k\leq k_0}\cM^k(\ul{x}^-,\ul{x}^+;\ul{L},\ul{J})$ with $\|\partial_s\ul{u}\|_\infty:=\max_i\|\partial_s u_i\|_\infty \leq R$.

Then $\cJ_{\rm reg}^{R}$ and $\cJ_{\rm reg}^{\ell,R}$ are open in the $\cC^\infty$- resp.\ $\cC^\ell$-topology, by the following compactness and gluing argument as in \cite{fhs}: Suppose by contradiction that $\ul{J}^\nu\to \ul{J}^\infty \in\cJ_{\rm reg}^{\ell,R}$ in the $\cC^1$-topology but $D_{\ul{u}^\nu}\dbar_{\ul{J}^\nu}$ fails to be surjective for some solutions $\dbar_{\ul{J}^\nu}\ul{u}^\nu=0$ with $\|\partial_s\ul{u}^\nu\|_\infty\leq R$.
Then a subsequence of $\ul{u}^\nu$ converges to a broken trajectory, consisting of a finite number of nonconstant solutions with respect to $\ul{J}^\infty$, and satisfying the same uniform derivative bound. These components cannot have negative index since $\ul{J}^\infty$ is regular for indices up to $k_0$. So, by index additivity, all components of the broken trajectory have index at most $k_0$, and thus the linearized operators at these solutions are surjective. Now a standard gluing construction shows that in fact $D_{\ul{u}^\nu}\dbar_{\ul{J}^\nu}$ must be surjective for some large~$\nu$.

We moreover know that $\cJ_{\rm reg}^{\ell,R}\subset \cJ_t^\ell(\ul{\delta})$ is dense since it contains the dense set $\cJ_{\rm reg}^\ell$ from Step 5. Now $\cJ_{\rm reg}^{R}\subset \cJ_t(\ul{\delta})$ is dense in the $\cC^\infty$-topology since $\cJ_{\rm reg}^{R} = \cJ_{\rm reg}^{\ell,R}\cap\cJ_t(\ul{\delta})$, where $\cJ_{\rm reg}^{\ell,R}\subset \cJ^\ell_t(\ul{\delta})$ is open and dense in the $\cC^\ell$-topology for all $\ell > k_0$.
Finally $\cJ_{\reg, k_0}(\ul{x}^-,\ul{x}^+)=\bigcap_{R\in\N} \cJ_{\rm reg}^{R}$ is a countable intersection of open dense subsets, i.e.\  comeagre.

\medskip

To finish the proof of the Theorem, let $\cJ_{\rm reg}(\ul{L};\ul{H})$ be the set of $\ul{J} \in \cJ_t(\ul{\delta})$ for which the linearized operators $D_{\ul{u}}\dbar_{\ul{J}}$ are surjective for all $\ul{u}\in\cM^k(\ul{x}^-,\ul{x}^+;\ul{L},\ul{J})$ with $\ul{x}^\pm\in\cI(\ul{L},\ul{H})$ and $k\in\Z$.
Then $\cJ_{\rm reg}(\ul{L};\ul{H}) = \bigcap_{k_0\in\N_0}\bigcap_{\ul{x}^\pm\in\cI(\ul{L},\ul{H})}\cJ_{\reg, k_0}(\ul{x}^-,\ul{x}^+)$  is comeagre in $\cJ_t(\ul{\delta})$ since it is the countable intersection of comeagre sets.
\end{proof}

\end{document}